\documentclass[11pt,a4paper]{article}

\usepackage[utf8]{inputenc}
\usepackage[T1]{fontenc}
\usepackage{lmodern}
\usepackage[english]{babel}
\usepackage{microtype}

\usepackage{mathtools}
\usepackage{amssymb}
\usepackage{amsthm}

\usepackage[
left=1.08in,
right=1.08in,
top=1.10in,
bottom=1.10in
]{geometry}

\allowdisplaybreaks[3]
\numberwithin{equation}{section}

\theoremstyle{plain}

\newtheorem{theorem}{Theorem}[section]
\newtheorem{proposition}[theorem]{Proposition}
\newtheorem{lemma}[theorem]{Lemma}

\theoremstyle{definition}

\theoremstyle{remark}

\DeclareMathOperator{\End}{End}
\DeclareMathOperator{\Tr}{Tr}
\newcommand{\Id}{\operatorname{Id}}

\usepackage[hidelinks]{hyperref}

\title{Limiting Behavior of a Class of Hermitian Yang--Mills Metrics, II:
Exponential Approximation}
\author{Jixiang Fu, Dekai Zhang}
\date{}

\begin{document}

\maketitle

\begin{abstract}
This paper is a sequel to \cite{Fu}, where the first-named author
constructed a family of approximate Hermitian Yang--Mills metrics
\(H_{0,\epsilon}\) on stable rank-two holomorphic vector bundles arising
from double spectral covers over the product of two one-dimensional
complex tori.

We prove that these approximate metrics give an all-order,
exponentially accurate asymptotic description of the exact Hermitian
Yang--Mills metrics in the large K\"ahler limit. More precisely, the
mean curvature of \(H_{0,\epsilon}\) decays exponentially in every
\(C^k\)-norm. Moreover, if \(H_{1,\epsilon}\) denotes the exact
Hermitian Yang--Mills metric and
\[
H_\epsilon=H_{0,\epsilon}^{-1}H_{1,\epsilon},
\]
then, after normalization, for every nonnegative integer \(k\), there
exist positive constants \(C_k\) and \(c_k\) such that
\[
\|H_\epsilon-\Id\|_{C^k}\leq C_k e^{-c_k/\epsilon}.
\]
The main analytic difficulty lies in the global \(C^0\)-comparison. Obtaining \(C^0\)-estimates for the coupled nonlinear Hermitian Yang--Mills system is intrinsically difficult; moreover, the equation controls only the contraction of the curvature, and hence only certain combinations of second derivatives, whereas one needs global control of the full matrix-valued metric.
\end{abstract}

\section{Introduction}
The complex Monge--Amp\`ere equation is a fundamental geometric PDE in K\"ahler geometry. Yau \cite{Yau} solved the Calabi conjecture and, in particular, established the existence of Ricci-flat K\"ahler metrics, namely Calabi--Yau metrics, under the appropriate topological assumption. The Hermitian Yang--Mills equation is another fundamental geometric PDE in K\"ahler geometry. Donaldson \cite{Don1985,Don1987} and Uhlenbeck--Yau \cite{UY} established the existence of Hermitian Yang--Mills metrics on stable holomorphic vector bundles.

Existence alone, however, generally gives little explicit information about the resulting canonical metrics. A fundamental example is the work of Gross and Wilson \cite{GW} on Calabi--Yau metrics on elliptically fibered K3 surfaces near a large complex structure limit. They constructed approximate Ricci-flat K\"ahler metrics adapted to the degeneration and proved that the exact Calabi--Yau metrics are exponentially close to them in all orders. Their work gives a precise quantitative description of the asymptotic geometry that is not visible from the existence theorem alone, and motivated many subsequent investigations of related degeneration problems
\cite{Wilson2004,Zharkov2004,Tosatti2010,RuanZhang2011,GTZ2013,ChenViaclovskyZhang2020,HeinTosatti2020,HeinTosatti2025}.

This paper is a sequel to \cite{Fu}, where the first-named author initiated an analogous study for Hermitian Yang--Mills metrics in a large K\"ahler limit and constructed a family of approximate Hermitian Yang--Mills metrics from the geometry of a double spectral cover.

More precisely, let \(X=B\times T\) be the product of two one-dimensional complex tori, and let \(\omega_\epsilon\) be the family of product K\"ahler metrics whose restrictions to \(B\) and \(T\) have areas \(\epsilon^{-1}\) and \(\epsilon\), respectively. Thus, as \(\epsilon\to0\), the base direction is stretched while the fiber direction collapses. Let \(V\) be a rank-two holomorphic vector bundle over \(X\) constructed from a double spectral cover \cite{Fri,FMW}, with
\[
c_1(V)=0.
\]
For all sufficiently small \(\epsilon\), the bundle \(V\) is slope-stable with respect to \(\omega_\epsilon\). Hence the Donaldson--Uhlenbeck--Yau theorem gives a Hermitian Yang--Mills metric \(H_{1,\epsilon}\) satisfying
\[
\Lambda_{\omega_\epsilon}\Theta(H_{1,\epsilon})=0.
\]

Let \(H_{0,\epsilon}\) be the approximate Hermitian Yang--Mills metric constructed in \cite{Fu}. To compare the two metrics, define the relative endomorphism
\[
H_\epsilon=H_{0,\epsilon}^{-1}H_{1,\epsilon}.
\]
It is positive definite and self-adjoint with respect to \(H_{0,\epsilon}\). After an appropriate normalization of \(H_{1,\epsilon}\), we may assume
\[
\det H_\epsilon=1.
\]
Our main result shows that the approximate metrics \(H_{0,\epsilon}\) describe the exact Hermitian Yang--Mills metrics with exponential accuracy in all orders.

\begin{theorem}\label{Thm1.1}
For every nonnegative integer \(k\), there exist positive constants \(C_k\) and \(c_k\) such that, for all sufficiently small \(\epsilon>0\),
\[
\|H_\epsilon-\Id\|_{C^k}\leq C_k e^{-c_k/\epsilon}.
\]
\end{theorem}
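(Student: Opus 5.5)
We prove Theorem~\ref{Thm1.1} in three stages: (i) show that the mean curvature of the approximate metric is exponentially small, (ii) turn this into an exponentially small global $C^0$ bound for $H_\epsilon-\Id$, and (iii) bootstrap to all $C^k$ norms by elliptic regularity. Throughout, all norms are taken with respect to $\omega_\epsilon$ and $H_{0,\epsilon}$ on the rescaled geometry. Blowing the base up by $\epsilon^{-1/2}$ and the fiber down by $\epsilon^{1/2}$, then passing to the universal cover in the fiber direction (or working on unit-size balls after rescaling), gives uniformly bounded geometry. Local elliptic estimates then have constants independent of $\epsilon$.

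\textbf{Step 1.} Write $\Lambda_{\omega_\epsilon}\Theta(H_{0,\epsilon})=:\Phi_\epsilon$. Using the explicit construction in \cite{Fu}, I would show $\|\Phi_\epsilon\|_{C^k}\le C_k e^{-c_k/\epsilon}$. The construction glues flat/harmonic data on the fibers away from the branch points of the double spectral cover to a model near the branch locus. The error comes from the non-constant Fourier modes in the fiber direction. On a fiber of size $\epsilon^{1/2}$, measured in base units of size $\epsilon^{-1/2}$, these modes decay like $e^{-c\,d/\epsilon}$, where $d$ is the base distance. After Fourier analysis along $T$, the correction terms satisfy ODE/elliptic equations whose nonzero modes have mass $\gtrsim \epsilon^{-1}$. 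Differentiation costs only polynomial factors in $\epsilon^{-1}$, and these are absorbed by shrinking $c_k$.

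\textbf{Step 2 (main obstacle).} The relative endomorphism satisfies
\[
\Lambda\bar\partial(H_\epsilon^{-1}\partial_{0}H_\epsilon)=-\Phi_\epsilon ,
\]
where $\partial_0$ is the $(1,0)$ connection of $H_{0,\epsilon}$. From this one gets the standard inequality
\[
\Delta \log \Tr H_\epsilon \ge -2|\Phi_\epsilon|,
\]
and similarly for $\Tr H_\epsilon^{-1}$. Since $\det H_\epsilon=1$, controlling $\sigma=\Tr H_\epsilon+\Tr H_\epsilon^{-1}-4\ge 0$ controls $|H_\epsilon-\Id|$.

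The difficulty is that the maximum principle alone gives only $\sup\sigma\lesssim$ mean value $+$ error, and the mean value is not controlled a priori. The Laplacian has small first eigenvalue, of order $\epsilon$, on the long base. I would close this using stability. I would decompose $s=\log H_\epsilon$ (trace free, $H_0$-self-adjoint) and use Donaldson's functional, or the identity
\[
\int\langle \Phi_\epsilon,s\rangle=\int\langle\Psi(s)\bar\partial s,\bar\partial s\rangle ,
\]
which yields
\[
\|\bar\partial s\|_{L^2}^2\lesssim \|\Phi_\epsilon\|_{L^2}\|s\|_{L^2}
\]
with weights when $|s|$ is bounded. The key is a uniform Poincaré-type inequality
\[
\|s\|_{L^2}\le C\epsilon^{-N}\|\bar\partial_{A_0} s\|_{L^2}
\]
for trace-free endomorphisms. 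This follows from the absence of holomorphic trace-free endomorphisms ($V$ is stable, hence simple) together with a quantitative lower bound on the first eigenvalue of $\bar\partial_{A_0}^*\bar\partial_{A_0}$. I would prove that bound from the spectral cover picture. On generic fibers $V|_T=L\oplus L^{-1}$ with $L$ nontrivial away from the branch points, so off-diagonal parts are controlled fiberwise. Diagonal parts reduce to functions on the cover, which is a connected curve with polynomially bounded Poincaré constant.

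Because the error is exponentially small while the constants are polynomial in $\epsilon^{-1}$, a continuity argument in $\sup|s|$ gives $\|s\|_{L^2}\le C e^{-c/\epsilon}$. Moser iteration applied to the subsolution inequality then gives $\sup|s|\le Ce^{-c/\epsilon}$.

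\textbf{Step 3.} With $C^0$ exponentially small, the equation is a small perturbation of the linear operator $\Lambda\bar\partial\partial_0$ acting on $s$. Standard local $L^p$ and Schauder estimates on unit balls of the bounded-geometry rescaling give
\[
\|s\|_{C^{k,\alpha}}\le C_k\bigl(\|s\|_{C^0}+\|\Phi_\epsilon\|_{C^{k}}\bigr)\le C_k e^{-c_k/\epsilon} .
\]
For the higher-derivative bounds to be uniform, one first needs a uniform $C^1$ bound on $H_\epsilon$; this follows from Donaldson/Uhlenbeck–Yau type gradient estimates, since $|\Phi_\epsilon|$ and $|\Theta(H_{0,\epsilon})|$ are bounded in rescaled coordinates. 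Converting back to the original norms introduces only polynomial factors in $\epsilon^{-1}$, which are absorbed into the exponential. This proves Theorem~\ref{Thm1.1}.
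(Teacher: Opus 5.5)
Your outline has the right shape: exponentially small mean curvature, an energy identity, a Poincar\'e inequality from the simplicity of $V$, Moser iteration, and elliptic bootstrapping with polynomial losses. You also correctly identify the Poincar\'e inequality as the crux of the $C^0$ estimate. But you never establish it, and the route you propose does not work as sketched. You want $\|s\|_{L^2}\le C\epsilon^{-N}\|\bar\partial s\|_{L^2}$ in the degenerating norms of $(H_{0,\epsilon},\omega_\epsilon)$, proved by a fiberwise spectral analysis on the cover.

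Near a branch point $\xi_a$ the two spectral points coalesce; locally they are $\pm\sqrt{z_a}$ after translation.
\begin{itemize}
\item The off-diagonal components are sections of the flat line bundles on $T$ corresponding to $\pm2\sqrt{z_a}\in T^*$. Their fiberwise gap for $\bar\partial^*\bar\partial$ is only of order $|z_a|/\epsilon$, and it closes at $z_a=0$.
\item The monodromy around $\xi_a$ interchanges the two summands. So the diagonal/off-diagonal splitting, and with it the reduction of the diagonal part to functions on $Y$, does not extend across the branch fibers.
\end{itemize}
Your sketch says nothing about these regions, nor about how the two parts couple through base derivatives. So Step 2 rests on an unproved estimate.

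The missing idea is that no analysis of the degenerating geometry is needed. The paper proves the Poincar\'e inequality for the \emph{fixed} pair $(H_*,\omega)$, with $H_*=H_{0,\epsilon_1}$, by compactness and $H^0(X,\End V)=\mathbb C\Id$; the constant is independent of $\epsilon$ (Lemma~\ref{lem:dolbeault-poincare-cn}). It then transfers this through the crude comparison $C^{-1}\epsilon H_*\le H_{0,\epsilon}\le C\epsilon^{-1}H_*$. That comparison follows from the elementary bound $e^{u_\epsilon}\ge C^{-1}\epsilon$ (Lemma~\ref{lem:u-bound-cn} and Proposition~\ref{prop:h0-comparison-cn}). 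Since $\bar\partial$ on $\End V$ involves no metric, the transfer costs only the factor $\epsilon^{-3}$ in \eqref{eq:normcomp}, which the exponential absorbs. In particular your inequality is true, with $N=5/2$ once the norm of $s$ is compared as well, but for this much simpler reason.

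There are further points.
\begin{itemize}
\item Your ``continuity argument in $\sup|s|$'' has no starting point: nothing tells you $\sup|s|$ is small for any particular $\epsilon$. It is also unnecessary. One option is to use that Simpson's function satisfies $\Psi(x,y)\ge c\,(1+|x-y|)^{-1}$. This gives $\|\bar\partial s\|^2\le C(1+\sup|s|)\|\Phi_\epsilon\|\,\|s\|$, which closes directly against a polynomial Moser bound. The other option is to argue as the paper does, with $\Tr H_\epsilon$ instead of $\log H_\epsilon$. The energy bound $\int|\bar\partial H_\epsilon|^2\le2\delta_\epsilon\sup\Tr H_\epsilon\int\Tr H_\epsilon$ and the Moser bound $\sup\Tr H_\epsilon\le2(1+e^{-c/\epsilon})a_\epsilon$ are homogeneous in $a_\epsilon=\frac12\int\Tr H_\epsilon$. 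The rank-two identity $a_\epsilon^2-1=-\int\det(H_\epsilon-a_\epsilon\Id)$ then forces $a_\epsilon$ to be exponentially close to $1$.
\item Step 1 is a heuristic rather than an argument, and its description is off. $H_{0,\epsilon}$ is exactly Hermitian Yang--Mills away from the fixed gluing annuli. The error there is the mismatch $u_\epsilon-\frac12\log r$ between the radial model and the singular limit. Its exponential decay comes from a $\sinh$ barrier for $v''+s^{-1}v'=2\varepsilon^{-2}s\sinh v$, once integrating the equation gives $v(1/4)\le C\varepsilon^2$.
\item $H_{0,\epsilon}$ does not have uniformly bounded curvature in your rescaled picture; it blows up like a negative power of $\epsilon$ in the shrinking cores at the branch points. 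These losses are only polynomial, so they are harmless.
\end{itemize}
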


Equivalently, in every fixed \(C^k\)-norm,
\[
H_{0,\epsilon}^{-1}H_{1,\epsilon}
=
\Id+O_{C^k}(e^{-c_k/\epsilon}).
\]
Thus the approximate metrics constructed in \cite{Fu} give an all-order, exponentially accurate asymptotic model for the exact Hermitian Yang--Mills metrics.

We now explain the geometry behind the approximation and the main difficulties in proving the theorem. The construction of \(H_{0,\epsilon}\) in \cite{Fu} is dictated by the spectral cover. Away from the branch locus, the limiting metric is explicitly determined by the spectral data. Near a branch point, this limiting metric becomes singular and is replaced by a smooth local Hermitian Yang--Mills model governed by a nonlinear radial equation. These local models are glued to the limiting metric across fixed annular regions and then conformally normalized to produce a globally smooth Hermitian metric on \(V\). The normalization removes the trace part of the mean curvature, so that
\[
\operatorname{Tr}\bigl(\Lambda_{\omega_\epsilon}\Theta(H_{0,\epsilon})\bigr)=0.
\]
In this sense, \(H_{0,\epsilon}\) is a geometrically constructed candidate for the asymptotic behavior of the exact Hermitian Yang--Mills metric.

The main analytic difficulty is the global \(C^0\)-comparison. Obtaining \(C^0\)-estimates for the coupled nonlinear Hermitian Yang--Mills system is intrinsically difficult. Moreover, the equation controls only the contraction of the curvature, and therefore only certain contracted combinations of second derivatives of the metric, together with nonlinear first-order terms, whereas the desired estimate requires global control of the full matrix-valued metric. The degeneration of the background K\"ahler metrics \(\omega_\epsilon\) introduces a further difficulty, since the constants in the usual Sobolev and elliptic estimates need not remain uniform as \(\epsilon\to0\).

We first establish exponential decay of the mean curvature of the approximate Hermitian Yang--Mills metrics, which is a key ingredient in the proof of the comparison theorem.

\begin{theorem}\label{Thm1.2}
For every nonnegative integer \(k\), there exist positive constants \(C_k\) and \(c_k\) such that, for all sufficiently small \(\epsilon>0\),
\[
\left\|
\Lambda_{\omega_\epsilon}\Theta(H_{0,\epsilon})
\right\|_{C^k}
\leq C_k e^{-c_k/\epsilon}.
\]
\end{theorem}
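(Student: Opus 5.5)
The strategy is to localize the mean curvature of \(H_{0,\epsilon}\) according to the construction in \cite{Fu}. On \(X=B\times T\) there are three kinds of regions: the part of \(X\) away from the branch locus, where \(H_{0,\epsilon}\) is the limiting metric determined by the spectral data; small neighbourhoods of the branch points, where \(H_{0,\epsilon}\) is the local Hermitian Yang--Mills model; and the fixed gluing annuli in between. Before estimating anything, we will rescale. On the fiber \(T\) of area \(\epsilon\), use the coordinate \(y\) in which \(T\) has unit size, and also rescale the base by \(\epsilon^{-1/2}\). Then
\[
\Lambda_{\omega_\epsilon}=\epsilon\,\Delta_B+\epsilon^{-1}\Delta_T
\]
in suitable unit coordinates. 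The \(C^k\)-norms in the statement are to be read in these fixed coordinates, and polynomial powers of \(\epsilon^{-1}\) lost in converting between norms are harmless against \(e^{-c/\epsilon}\).

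\textbf{Step 1 (away from the branch locus).} Here the limiting metric is flat in the fiber direction, since it comes from the flat line bundles on \(T\) parametrized by the spectral cover. Its curvature is therefore a pure base term, and by construction in \cite{Fu} the limiting metric solves the equation exactly, so \(\Lambda_{\omega_\epsilon}\Theta=0\) there.

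\textbf{Step 2 (near the branch points).} The local model is an exact solution of the radial nonlinear equation, so its mean curvature also vanishes identically, up to the trace part.

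\textbf{Step 3 (the trace part).} The global conformal normalization removes the trace part entirely, giving \(\operatorname{Tr}\Lambda_{\omega_\epsilon}\Theta(H_{0,\epsilon})=0\). It remains to check that the conformal factor \(f\) solving \(\Delta f = \text{(trace error)}\) is itself exponentially small in \(C^k\). This follows once the trace error is exponentially small and has zero mean: invert the Laplacian, accepting only polynomial loss in \(\epsilon\) from the degenerate metric.

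\textbf{Step 4 (the gluing annuli — the main obstacle).} On the annuli \(H_{0,\epsilon}=\chi H_{\mathrm{model}}+(1-\chi)H_{\lim}\) (or a multiplicative interpolation \(H_{\lim}\exp(\chi s)\)), and the error is controlled by \(s=\log(H_{\lim}^{-1}H_{\mathrm{model}})\) together with its derivatives. The plan is to prove that the model converges to the limiting metric exponentially fast. Write the local model's radial ODE, of Painlevé III/sinh-Gordon type after separating the fiber variable, in the scaled variable \(\rho=r/\epsilon\). At large \(\rho\) the linearization about the limiting (singular) solution is \(u''+u'/\rho-\kappa^2u=0\), whose decaying solution is a modified Bessel function \(K_0(\kappa\rho)\sim e^{-\kappa\rho}\). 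Because the annulus sits at fixed \(r\), we have \(\rho\sim 1/\epsilon\) there, which yields \(|s|\le Ce^{-c/\epsilon}\). The same bound holds for all \(\rho\)-derivatives, by differentiating the ODE (or by elliptic bootstrapping on unit balls in the rescaled coordinates), and hence for all \(r\)-derivatives up to polynomial factors.

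To make this rigorous, I would use a comparison/maximum principle for the nonlinear ODE with a supersolution \(Ae^{-\kappa'\rho}\), \(\kappa'<\kappa\), valid on \(\rho\ge\rho_0\). This relies on the monotonicity of the nonlinearity (\(\sinh\) is increasing) and on the asymptotic boundary data of the model established in \cite{Fu}. The curvature of the interpolated metric is then a smooth expression in \(s,\nabla s,\nabla^2 s\) and \(\chi\), with coefficients bounded polynomially in \(\epsilon^{-1}\), and it vanishes when \(s=0\). It is therefore bounded in \(C^k\) by \(C_k\epsilon^{-N_k}e^{-c/\epsilon}\le C_k e^{-c_k/\epsilon}\).

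Combining the three regions with Step 3 proves the theorem. The main obstacle is Step 4: establishing the sharp exponential convergence of the nonlinear radial model to the singular limiting solution, uniformly in derivatives. The other steps are essentially bookkeeping of the construction in \cite{Fu}.
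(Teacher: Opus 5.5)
Your overall architecture matches the paper's. Off the gluing annuli, $\Lambda_{\omega_\epsilon}\Theta(H_{0,\epsilon})=0$. On the annuli it is an explicit expression in $\eta_\epsilon=u_\epsilon-\frac12\log r$ and its derivatives, with coefficients polynomial in $\epsilon^{\pm1}$ (the paper's $\psi_\epsilon$). So everything reduces to $\|\eta_\epsilon\|_{C^{k+2}([r_0,2r_0])}\le C_ke^{-c_k/\epsilon}$, which is proved by a barrier for the radial ODE followed by a bootstrap.

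\textbf{Step~4 uses the wrong variable, and the comparison fails as stated.} Step~4 carries all the content. Writing $u_\epsilon=\frac12\log r+\eta$, the equation becomes $\eta''+r^{-1}\eta'=8\pi^2\epsilon^{-2}r\sinh(2\eta)$. So the linearized potential is $16\pi^2r/\epsilon^2$, which vanishes at the branch point. In $\rho=r/\epsilon$ the linearization reads $\eta_{\rho\rho}+\rho^{-1}\eta_\rho=16\pi^2\epsilon\rho\,\eta$. There is no constant $\kappa$: the potential is $O(\epsilon)$ wherever $\rho=O(1)$. The function $Ae^{-\kappa'\rho}$ with fixed $\kappa'>0$ satisfies the supersolution inequality only where $\kappa'^2-\kappa'/\rho\le 16\pi^2\epsilon\rho$, which fails on most of $\rho_0\le\rho\lesssim\kappa'^2/(16\pi^2\epsilon)$. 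The $K_0(\kappa\rho)$ heuristic is therefore wrong in this variable.

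There are two ways to repair this.
\begin{itemize}
\item Use the correct inner scale $r\sim\epsilon^{2/3}$. With $t=\frac{8\pi}{3}r^{3/2}/\epsilon$ and $v=2\eta$ one gets exactly $v_{tt}+t^{-1}v_t=\sinh v$. Then $Ae^{-\kappa' t}$ with $0<\kappa'<1$ is a genuine supersolution for all $t>0$, and your sinh-Gordon/Bessel picture becomes literally true.
\item Do what the paper does: apply the maximum principle only on a fixed interval $r\in[r_*,2r_0]$ with $0<r_*<r_0$ (the paper takes $r_*=r_0/2$). There the potential $16\pi^2 r\epsilon^{-2}\sinh v/v$ is at least $16\pi^2r_*\epsilon^{-2}$. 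Use the barrier $v(r_*)\sinh(\mu(2r_0-r))/\sinh(\mu(2r_0-r_*))$ with $\mu\sim\epsilon^{-1}$, which also matches the Dirichlet condition $v(2r_0)=0$.
\end{itemize}

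\textbf{You also need an inner-endpoint bound that you never supply.} Either version requires an upper bound for $v$ at the inner endpoint of the comparison region, at most polynomial in $\epsilon^{-1}$. You defer this to unspecified ``asymptotic boundary data from \cite{Fu}''. But the model lives on the bounded disk $D_{2r_0}$ with exact Dirichlet matching, so this input has to be produced. The paper gets $v_\varepsilon(1/4)\le C\varepsilon^2$ from the identity $2\varepsilon^{-2}\int_0^1 s^2\sinh v_\varepsilon\,ds=1+v_\varepsilon'(1)<1$, together with the monotonicity from \cite{Fu}. The cruder bound $v\le\log(2r_0)-\log r$, which follows from $u_\epsilon'\ge0$ and the boundary value, would also suffice; it is $O(\log\epsilon^{-1})$ even at a fixed $t=t_0$.

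\textbf{Step~3 is unnecessary and misdescribes the construction.} The conformal factor is the explicit, annulus-supported $e^{-\frac12(\phi_1+\phi_2)}$, not the solution of a Poisson equation. A conformal change alters $\Lambda_{\omega_\epsilon}\Theta$ only by a multiple of $\Id$, so $\Lambda_{\omega_\epsilon}\Theta(H_{0,\epsilon})$ is just the trace-free part of $\Lambda_{\omega_\epsilon}\Theta(h_\epsilon)$. A conformal change also leaves endomorphism norms unchanged. Hence the size of the factor never enters the estimate.
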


The exponential rate originates in the local model near the branch points. After rescaling, the difference between the smooth radial solution and the singular limiting solution satisfies a singularly perturbed second-order equation with a large positive zeroth-order term. A comparison argument gives exponential decay on the fixed gluing annulus, and the equation then yields exponential estimates for all higher derivatives. Since the mean curvature of \(H_{0,\epsilon}\) is supported in the gluing regions, this gives the global estimate above.

Passing from exponentially small mean curvature to exponential closeness of the metrics is a genuinely global problem. Since \(\det H_\epsilon=1\) and \(V\) has rank two, one has
\[
\Tr H_\epsilon\geq2.
\]
Thus the \(C^0\)-comparison is essentially reduced to controlling \(\sup_X(\Tr H_\epsilon-2)\). The Hermitian Yang--Mills equation gives a differential inequality for \(\Tr H_\epsilon\). A Moser iteration based on a Sobolev inequality adapted to the degenerating metrics controls its \(L^\infty\)-norm in terms of its averaged part. To control the latter, we compare \(H_{0,\epsilon}\) with a fixed Hermitian metric and use the simplicity of the stable bundle to obtain a fixed Poincar\'e-type inequality on \(\End(V)\). Together with an energy estimate and the determinant normalization, this yields
\[
\sup_X\left|\Tr H_\epsilon-2\right|
\leq e^{-c/\epsilon},
\]
and hence the exponential \(C^0\)-comparison. The higher-order estimates then follow by elliptic arguments; the polynomial losses caused by the degenerating geometry are absorbed by the exponential decay.

Viewed together, \cite{Fu} and the present paper address the two basic steps of the approximation problem. The former constructs the approximate Hermitian Yang--Mills metrics from the spectral-cover geometry, while the present paper proves that these metrics capture the exact solutions with exponential accuracy. In this sense, the two papers provide, in the present setting, a Hermitian Yang--Mills analogue of the exponential approximation picture of Gross and Wilson.

Related convergence results on collapsing elliptically fibered K3 surfaces were obtained by Datar--Jacob \cite{DatarJacob2022} and Datar--Jacob--Zhang \cite{DatarJacobZhang2021}. Their work treats the more general non-flat K3 geometry and establishes convergence on generic fibers and subsequential convergence away from finitely many fibers, respectively. The result here is of a different, quantitative nature: in the flat product-torus setting, the spectral-cover construction provides a global approximate metric, and the exact Hermitian Yang--Mills metric is shown to be exponentially close to it in every order. The flat product-torus setting was originally introduced in \cite{Fu} as a model problem for the corresponding question on elliptically fibered K3 surfaces, with the aim of first understanding the local geometry and the Hermitian Yang--Mills model near the branch points of the spectral cover before passing to the non-flat K3 setting.

The paper is organized as follows. In Section~2 we recall the spectral-cover geometry and the construction of the approximate Hermitian Yang--Mills metrics in \cite{Fu}. In Section~3 we establish the exponential estimates for the local radial model and deduce the exponential decay of the mean curvature of \(H_{0,\epsilon}\), thereby proving Theorem~\ref{Thm1.2}. In Section~4 we prove the global exponential \(C^0\)-estimate for \(H_\epsilon\) and derive the higher-order estimates,  completing the proof of Theorem~\ref{Thm1.1}.

\section{Preliminaries}
\label{sec:preliminaries}

In this section, we recall the geometric setting and the construction
of the approximate Hermitian Yang--Mills metrics in \cite{Fu}.

Following \cite[Sections~1--2]{Fu}, let
\(\Gamma=\mathbb Z+\sqrt{-1}\mathbb Z\), and let
\(\mathbb C^\ast\) denote the dual complex vector space of
\(\mathbb C\), with dual lattice \(\Gamma^\ast\). Let \(B\) and \(T\)
be two copies of the one-dimensional complex torus
\(\mathbb C/\Gamma\), and set \(X=B\times T\). Let
\(T^\ast=\mathbb C^\ast/\Gamma^\ast\) be the dual torus of \(T\), and
set \(X^\ast=B\times T^\ast\). We write
\[
z=x_1+\sqrt{-1}x_2,\qquad
w=y_1+\sqrt{-1}y_2,\qquad
w^\ast=y_1^\ast+\sqrt{-1}y_2^\ast
\]
for the standard complex coordinates on the respective universal
covers of \(B\), \(T\), and \(T^\ast\). We equip \(X\) with the family
of K\"ahler metrics 
\[
\omega_\epsilon
=
\frac{\sqrt{-1}}{2}\epsilon^{-1}dz\wedge d\bar z
+
\frac{\sqrt{-1}}{2}\epsilon\,dw\wedge d\bar w.
\]
 Then the volumes $\frac{\omega_\epsilon^2}{2}$ is independent of \(\epsilon\).

Let \(Y\subset X^\ast\) be a smooth spectral curve. Denote by
\(\varphi:Y\to B\) and \(q:Y\to T^\ast\) the restrictions of the two
projections, and suppose that \(\varphi\) is a double cover with branch
divisor
\[
D_0=\sum_{a=1}^n\xi_a.
\]
We assume that \(4\mid n\), choose distinct points
\(\xi_{n+1},\ldots,\xi_{5n/4}\) away from
\(\operatorname{Supp} D_0\), and set
\[
D_1=\sum_{j=n+1}^{5n/4}\xi_j.
\]
As in \cite{Fu}, we define the rank two holomorphic vector bundle
\[
V
=
p_{2*}\bigl(\iota^\ast\mathcal P\otimes p_1^\ast \varphi^\ast\mathcal O_B(D_1)\bigr)
\longrightarrow X,
\]
where \(\mathcal P\to T^\ast\times T\) is the Poincar\'e line bundle,
\[
\iota=(q,\operatorname{id}_T):Y\times T\longrightarrow T^\ast\times T,
\qquad
p_1=\operatorname{pr}_Y:Y\times T\longrightarrow Y,
\]
and
\[
p_2=(\varphi,\operatorname{id}_T):Y\times T\longrightarrow X.
\]
This construction is based on the spectral-cover constructions of
Friedman \cite{Fri} and Friedman--Morgan--Witten \cite{FMW}, and it
satisfies
\[
c_1(V)=0.
\]

By the adiabatic argument in \cite{Fu}, following \cite{FMW}, the
bundle \(V\) is slope-stable with respect to \(\omega_\epsilon\) for
all sufficiently small \(\epsilon\). Hence, for every sufficiently
small \(\epsilon\), there exists a Hermitian Yang--Mills metric
\(H_{1,\epsilon}\) on \(V\) whose Chern connection is irreducible.
Since \(c_1(V)=0\), its mean curvature vanishes:
\[
\Lambda_{\omega_\epsilon}\Theta(H_{1,\epsilon})=0,
\]
where \(\Lambda_{\omega_\epsilon}\) denotes contraction with
\(\omega_\epsilon\).

We next recall the approximate metric constructed in
\cite[Sections~2--5]{Fu}. Choose \(r_0>0\) sufficiently small so that
the disks of radius \(2r_0\) centered at
\(\xi_1,\ldots,\xi_{5n/4}\) are pairwise disjoint. Set
\[
D=D_0-4D_1,
\]
which is a divisor of degree zero on \(B\), and let \(G\) be the Green
function associated with \(D\). Choose a local coordinate \(z_\alpha\)
centered at each \(\xi_\alpha\). Near a branch point \(\xi_a\), one has
\[
G(z_a)=-\log|z_a|+2g_a(z_a),
\]
where \(g_a\) is real-valued and harmonic. Near a point
\(\xi_j\in\operatorname{Supp}(D_1)\), one similarly has
\[
G(z_j)=4\log|z_j|+2g_j(z_j),
\]
where \(g_j\) is real-valued and harmonic.

With respect to the smooth frame used in \cite{Fu} over
\[
\bigl(B\setminus
(\operatorname{Supp} D_0\cup\operatorname{Supp} D_1)\bigr)\times T,
\]
define
\[
h_0=e^{G/2}\Id.
\]
Then \(h_0\) is a Hermitian Yang--Mills metric on this complement.
Using the corresponding transition functions, it extends smoothly
across \(D_1\), but is singular along \(D_0\).

Near each branch point \(\xi_a\), after translating the local
\(T^\ast\)-coordinate if necessary, choose a coordinate \(z_a\) on
\(B\) such that the spectral cover is locally given by
\[
(w^\ast)^2=z_a.
\]
Following \cite[Sections~3--4]{Fu}, let
\[
h_{a,\epsilon}
=
e^{g_a(z_a)}
\begin{pmatrix}
e^{-u_\epsilon}&0\\
0&e^{u_\epsilon}
\end{pmatrix}
\]
be the corresponding smooth local Hermitian Yang--Mills metric, where
\(u_\epsilon\) is the unique smooth radial solution of the Dirichlet
problem recalled in the next section.

Choose a fixed smooth cutoff function
\(\rho=\rho(|z_a|^2)\) such that
\[
\rho=1\quad\text{for }|z_a|\leq r_0,
\qquad
\rho=0\quad\text{for }|z_a|\geq\frac{4r_0}{3}.
\]
On each branch disk, define
\[
h_\epsilon
=
(1-\rho)h_0+\rho h_{a,\epsilon},
\]
and set \(h_\epsilon=h_0\) elsewhere. This defines a globally smooth
Hermitian metric on \(V\).

On each branch disk, write
\[
h_\epsilon
=
e^{g_a(z_a)}
\begin{pmatrix}
e^{\phi_1}&0\\
0&e^{\phi_2}
\end{pmatrix},
\]
where
\[
e^{\phi_1}
=
(1-\rho)|z_a|^{-1/2}+\rho e^{-u_\epsilon},
\qquad
e^{\phi_2}
=
(1-\rho)|z_a|^{1/2}+\rho e^{u_\epsilon}.
\]
Since \(\phi_1+\phi_2\) vanishes near the boundary of each branch
disk, it extends smoothly by zero outside the branch disks. The
approximate metric is then defined by
\[
H_{0,\epsilon}
=
e^{-\frac12(\phi_1+\phi_2)}h_\epsilon.
\]
This conformal normalization gives
\[
\Tr\bigl(
\Lambda_{\omega_\epsilon}\Theta(H_{0,\epsilon})
\bigr)=0.
\]
Moreover, the mean curvature
\(\Lambda_{\omega_\epsilon}\Theta(H_{0,\epsilon})\) is supported in
the union of the fixed gluing annuli
\[
\bigcup_{a=1}^n
\Bigl\{
r_0\leq |z_a|\leq \frac{4r_0}{3}
\Bigr\}\times T.
\]

\section{Exponential Decay of the Mean Curvature of the Approximate HYM Metric}
\label{sec:mean-curvature}

We now establish exponential estimates for the local radial solution on a fixed annulus and use them to prove exponential decay of the mean curvature of $H_{0,\epsilon}$. The required properties of the radial solution are contained in \cite[Theorem~6 and Proposition~7]{Fu}.

Let $\xi_a$ be a branch point and let $D_R=\{z_a:|z_a|<R\}$, where $R=2r_0$ is independent of $\epsilon$. Write $r=|z_a|$. By \cite[Section~3 and Theorem~6]{Fu}, $u_\epsilon$ is the unique smooth solution, independent of the fiber variable, of
\[
\begin{cases}
\displaystyle \Delta u_\epsilon=\dfrac{4\pi^2}{\epsilon^2}\left(e^{2u_\epsilon}-|z_a|^2e^{-2u_\epsilon}\right),&\text{in }D_R,\\[4pt]
\displaystyle u_\epsilon=\dfrac12\log R,&\text{on }\partial D_R.
\end{cases}
\]
Here $\Delta$ denotes the Euclidean Laplacian in the $z_a$-coordinate. Since the equation and the boundary condition are rotationally invariant, uniqueness implies that $u_\epsilon$ is radial. Thus $u_\epsilon=u_\epsilon(r)$ and
\begin{equation}
	\left\{
	\begin{aligned}
		u_\epsilon''(r)+\frac{1}{r}u_\epsilon'(r)
		&=\frac{4\pi^2}{\epsilon^2}
		\left(e^{2u_\epsilon(r)}-r^2e^{-2u_\epsilon(r)}\right),
		&& 0<r<R,\\
		u_\epsilon(R)
		&=\frac{1}{2}\log R.
	\end{aligned}
	\right.
	\label{eq:fu-ode}
\end{equation}
Moreover, smoothness at the origin gives $u_\epsilon'(0)=0$. The boundary value agrees with that of the singular solution $u_0(r)=\frac12\log r$.

To compare with the normalized equation in \cite[Proposition~7]{Fu}, set
\begin{equation}
s=\frac{r}{2r_0},\qquad
\varepsilon=\frac{\epsilon}{8\pi r_0^{3/2}},\qquad
\overline u_{\varepsilon}(s)=2u_\epsilon(2r_0s)-\log(2r_0).
\label{eq:radial-rescaling}
\end{equation}
A direct calculation gives
\[
\overline u_{\varepsilon}''+s^{-1}\overline u_{\varepsilon}'
=\varepsilon^{-2}\left(e^{\overline u_{\varepsilon}}-s^2e^{-\overline u_{\varepsilon}}\right),
\qquad 0<s<1,\quad \overline u_{\varepsilon}(1)=0,
\]
which is equation~(4.2) in \cite{Fu}. Since $\overline u_{\varepsilon}'(s)=4r_0u_\epsilon'(2r_0s)$, Proposition~7 of \cite{Fu} implies that for $0\leq r\leq R$, we have
\[
u_\epsilon'(r)\geq0.
\]

The following proposition gives exponential estimates for the radial solution on the fixed annulus $[r_0,2r_0]$.

\begin{proposition}
\label{prop:exponential-ode}
For any nonnegative integer $k$, there exist constants $C_k,c_k>0$, independent of $\epsilon$, such that for all sufficiently small $\epsilon>0$,
\begin{equation}
\left\|u_\epsilon-\frac12\log r\right\|_{C^k([r_0,2r_0])}\leq C_ke^{-c_k/\epsilon}.
\label{eq:exponential-ode}
\end{equation}
\end{proposition}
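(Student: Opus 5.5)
Set $v_\epsilon=u_\epsilon-u_0$ with $u_0=\frac12\log r$. Since $\Delta u_0=0$ away from the origin and $e^{2u_0}=r^2e^{-2u_0}=r$, the equation \eqref{eq:fu-ode} becomes, on $0<r\le R$,
\[
\Delta v_\epsilon=\frac{4\pi^2}{\epsilon^2}\,r\bigl(e^{2v_\epsilon}-e^{-2v_\epsilon}\bigr)=\frac{8\pi^2}{\epsilon^2}\,r\sinh(2v_\epsilon),\qquad v_\epsilon(R)=0.
\]
The nonlinearity $\sinh(2v)$ has the sign of $v$ and satisfies $\sinh(2v)/v\ge 2$. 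Writing $\Delta v_\epsilon=c_\epsilon(r)v_\epsilon$ with $c_\epsilon=\frac{8\pi^2}{\epsilon^2}r\sinh(2v_\epsilon)/v_\epsilon$, we get $c_\epsilon\ge \frac{16\pi^2 r}{\epsilon^2}\ge \frac{8\pi^2 r_0}{\epsilon^2}$ on the annulus $r_0/2\le r\le R$. So $v_\epsilon$ solves a linear equation with a huge positive zeroth-order coefficient, which is the source of exponential decay away from the boundary values.

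\textbf{Step 1 (uniform bound on $[r_0/2,R]$).} We need $|v_\epsilon|\le M$ on $[r_0/2,R]$ with $M$ independent of $\epsilon$. First, $u_\epsilon'\ge0$ (from \cite[Proposition~7]{Fu}) gives $u_\epsilon\le \frac12\log R$, so $v_\epsilon\le \frac12\log(R/r)\le \frac12\log 4$ there. For the lower bound, use monotonicity again: $v_\epsilon(r_0/2)=u_\epsilon(r_0/2)-u_0(r_0/2)\ge u_\epsilon(0)-\frac12\log(r_0/2)$, so it suffices to bound $u_\epsilon(0)$ from below uniformly. Since $u_\epsilon$ is subharmonic wherever $u_\epsilon\ge u_0$ and superharmonic where $u_\epsilon\le u_0$, a maximum-principle comparison with a subsolution should work. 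If \cite{Fu} already gives $u_\epsilon\to u_0$ locally uniformly away from $0$ (Theorem~6 or Proposition~7), we quote that instead. Alternatively, on $[r_0/2,R]$ the set where $v_\epsilon<0$ is an interval on which $v_\epsilon$ is superharmonic with $v_\epsilon$ increasing, so it is controlled by its endpoint values. I expect this uniform $C^0$ bound to be the main obstacle, and I would try the monotonicity together with the cited results from \cite{Fu} first.

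\textbf{Step 2 (exponential $C^0$ decay).} On the annulus $A=\{r_0/2\le r\le R\}$ we compare $\pm v_\epsilon$ with the barrier
\[
\psi=M\Bigl(e^{-\lambda(r-r_0/2)}+e^{-\lambda(R-r)}\Bigr)\cdot(\text{const}),\qquad \lambda=\frac{\kappa}{\epsilon},
\]
with $\kappa$ small. Then $\Delta\psi\le (\lambda^2+\lambda/r)\psi\le c_\epsilon\psi$ for $\epsilon$ small, so $\psi$ is a supersolution of $\Delta w=c_\epsilon w$. Since $\psi\ge|v_\epsilon|$ on $\partial A$ (using $v_\epsilon(R)=0$ and Step 1), the maximum principle for $\Delta-c_\epsilon$ with $c_\epsilon>0$ gives $|v_\epsilon|\le\psi$ on $A$. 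Because $[r_0,2r_0]$ contains the Dirichlet endpoint $R=2r_0$, the second barrier term is not small there. But there the boundary value is exactly $0$, so we drop that term and use only $M e^{-\lambda(r-r_0/2)}$, which still dominates on $\partial A$. This gives $|v_\epsilon|\le Ce^{-\kappa r_0/(2\epsilon)}$ on $[r_0,R]$.

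\textbf{Step 3 (higher derivatives).} Rescale $t=r/\epsilon$ on subintervals of length $\epsilon$ within $[3r_0/4,R]$. In the variable $t$ the equation reads $v_{tt}+\frac{\epsilon}{r}v_t=8\pi^2 r\sinh(2v)$, with all coefficients smooth and bounded uniformly. Interior ODE estimates, and boundary estimates at $r=R$ using $v(R)=0$, give $\|\partial_t^k v\|\le C_k\sup|v|$ on slightly smaller intervals. Converting back, $|\partial_r^k v_\epsilon|\le C_k\epsilon^{-k}e^{-\kappa r_0/(2\epsilon)}$ on $[r_0,2r_0]$. The polynomial loss is absorbed by shrinking the exponent, which yields \eqref{eq:exponential-ode}.
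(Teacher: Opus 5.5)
You have a gap in the lower bound of Step~1. Steps~2 and~3 are sound and use the same mechanism as the paper: a maximum-principle comparison on a fixed annulus, with a barrier decaying at rate $\sim 1/\epsilon$ forced by the coefficient $c_\epsilon\gtrsim\epsilon^{-2}$. But the lower bound is never established, and the reduction you propose for it cannot work.

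You reduce it to a uniform bound $u_\epsilon(0)\ge -M$, and that bound is false. Suppose $u_\epsilon\ge -M$ on $[0,R]$ and set $\rho_0:=e^{-2M}/\sqrt2$. For $r\le\rho_0$ the right-hand side of \eqref{eq:fu-ode} is at least $2\pi^2e^{-2M}\epsilon^{-2}$. Integrating $(ru_\epsilon')'$ from $0$ then gives $u_\epsilon(\rho_0)-u_\epsilon(0)\ge \frac{\pi^2}{2}e^{-2M}\rho_0^2\,\epsilon^{-2}$. This is incompatible with $-M\le u_\epsilon\le\frac12\log R$ for small $\epsilon$. In fact $u_\epsilon(0)\to-\infty$, and Lemma~\ref{lem:u-bound-cn} only gives $e^{u_\epsilon(0)}\ge C^{-1}\epsilon$.

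Your other suggestions are left open, and the "alternatively" argument is circular as stated. On $[r_0/2,R]$ the superharmonic region $\{v_\epsilon<0\}$ may contain the endpoint $r_0/2$, where the lower bound is exactly what you are trying to prove. The claim that $v_\epsilon$ is increasing there is also unjustified.

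The missing fact is simply $v_\epsilon\ge0$ on $(0,R]$. The paper takes it from \cite[Lemma~8]{Fu}. You can also get it from your own superharmonicity observation, applied on the whole punctured disk instead of $[r_0/2,R]$:
since $u_\epsilon$ is smooth at $0$, $v_\epsilon\to+\infty$ as $r\to0$, and $v_\epsilon(R)=0$. So a negative minimum would occur at an interior point, where $\Delta v_\epsilon\ge0$ contradicts $\Delta v_\epsilon=8\pi^2\epsilon^{-2}r\sinh(2v_\epsilon)<0$.

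Your reduction can also be rescued by asking only for $u_\epsilon(0)\ge\log\epsilon-C$, as in Lemma~\ref{lem:u-bound-cn}. This gives $M\sim\log(1/\epsilon)$, which is harmless in Step~2.

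With $0\le v_\epsilon\le\frac12\log4$ on $[r_0/2,R]$, only the upper barrier is needed and the rest of your argument goes through. Compared with the paper's proof:
your monotonicity bound replaces the paper's integrated identity $2\varepsilon^{-2}\int_0^1 s^2\sinh v_\varepsilon\,ds=v_\varepsilon'(1)+1<1$ in rescaled variables. That identity yields the stronger but unnecessary $v_\epsilon(r_0/2)\le C\epsilon^2$, since the barrier supplies the exponential factor anyway.
Your rescaled ODE estimates replace the paper's convexity and difference-quotient bound on $v'$, followed by induction on the equation. Both routes lose only powers of $\epsilon^{-1}$, which the exponential absorbs.
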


\begin{proof}
Use the rescaled variables in \eqref{eq:radial-rescaling}, and continue to regard $\varepsilon$ as the small parameter. Set
\[
v_{\varepsilon}(s)=\overline u_{\varepsilon}(s)-\log s.
\]
Then
\begin{equation}
v_{\varepsilon}''+s^{-1}v_{\varepsilon}'
=2\varepsilon^{-2}s\sinh v_{\varepsilon},
\qquad v_{\varepsilon}(1)=0.
\label{eq:v-ode}
\end{equation}
Lemma~8 of \cite{Fu} gives $v_{\varepsilon}>0$ and
$v_{\varepsilon}'<0$ on $(0,1)$; equation~\eqref{eq:v-ode} then gives
$v_{\varepsilon}''>0$. Since $\overline u_{\varepsilon}$ is smooth
and radial at the origin,
$\lim_{s\to0}s v_{\varepsilon}'(s)=-1$.

Multiplying \eqref{eq:v-ode} by $s$, integrating over $[\delta,1]$,
and then letting $\delta\downarrow0$, we obtain
\[
2\varepsilon^{-2}\int_0^1s^2\sinh v_{\varepsilon}(s)\,ds
=v_{\varepsilon}'(1)+1<1.
\]
Let $a=1/4$.  Since $v_{\varepsilon}$ is decreasing,
\[
\frac{a^3}{3}\sinh v_{\varepsilon}(a)
\leq\int_0^a s^2\sinh v_{\varepsilon}(s)\,ds
\leq\frac{\varepsilon^2}{2}.
\]
Since
$0<v_{\varepsilon}(a)\leq\sinh v_{\varepsilon}(a)$, we get
\[
v_{\varepsilon}(a)\leq C\varepsilon^2.
\]
For \(0<s<1\), define
\[
q_{\varepsilon}(s)
=
2\varepsilon^{-2}s
\frac{\sinh v_{\varepsilon}(s)}{v_{\varepsilon}(s)}.
\]
Since \(v_{\varepsilon}(1)=0\) and
\(\frac{\sinh t}{t}\to1\) as \(t\to0\), the function
\(q_{\varepsilon}\) extends continuously to \(s=1\) by setting
$
q_{\varepsilon}(1)=2\varepsilon^{-2}.
$
Moreover, 
\[
q_{\varepsilon}(s)
\geq 2\varepsilon^{-2}s
\geq \frac12\varepsilon^{-2},
\quad s\in [a, 1].
\]
Let \(L_{\varepsilon}\) be the second-order linear differential
operator on \([a,1]\) defined by
\[
L_{\varepsilon}f
:=
f''+\frac1s f'-q_{\varepsilon}f.
\]

Set $\mu=\frac{1}{2\varepsilon}$, and define
\[
\Phi_{\varepsilon}(s)=v_{\varepsilon}(a)
\frac{\sinh\bigl(\mu(1-s)\bigr)}{\sinh\bigl(\mu(1-a)\bigr)}.
\]
Then $\Phi_{\varepsilon}(a)=v_{\varepsilon}(a)$,
$\Phi_{\varepsilon}(1)=0$, $\Phi_{\varepsilon}'<0$, and
$\Phi_{\varepsilon}''=\mu^2\Phi_{\varepsilon}$. Thus
\[
L_{\varepsilon}\Phi_{\varepsilon}
=(\mu^2-q_{\varepsilon})\Phi_{\varepsilon}
+s^{-1}\Phi_{\varepsilon}'<0.
\]
Applying the maximum principle to $v_{\varepsilon}-\Phi_{\varepsilon}$, we have
\[
0\leq v_{\varepsilon}\leq\Phi_{\varepsilon}\quad\text{on}\quad {[a,1]}.
\]
Since
\[
\frac{\sinh(\mu(1-s))}{\sinh(\mu(1-a))}\leq Ce^{-\mu(s-a)},
\]
it follows that
\[
0\leq v_{\varepsilon}(s)\leq C\varepsilon^2e^{-\frac{1}{8\varepsilon}}
\leq Ce^{-\frac{1}{16\varepsilon}},\quad  s\in[\frac12, 1].
\]
The preceding comparison also gives
\[
\|v_{\varepsilon}\|_{C^0([3/8,1])}
\leq Ce^{-c/\varepsilon}.
\]

We now estimate the derivatives. Since $v_{\varepsilon}$ is decreasing
and convex, for $s\in[1/2,1]$ and $h=1/8$,
\[
0\leq-v_{\varepsilon}'(s)
\leq\frac{v_{\varepsilon}(s-h)-v_{\varepsilon}(s)}{h}
\leq8v_{\varepsilon}(s-h)
\leq Ce^{-c/\varepsilon}.
\]
Using \eqref{eq:v-ode} and $\sinh t\leq Ct$ for the exponentially
small values of $t=v_{\varepsilon}$ on this interval, we get
\[
\|v_{\varepsilon}''\|_{C^0([1/2,1])}
\leq C\varepsilon^{-2}e^{-c/\varepsilon}
\leq Ce^{-c_2/\varepsilon}.
\]

For the higher derivatives, rewrite \eqref{eq:v-ode} as
\[
v_{\varepsilon}''=-s^{-1}v_{\varepsilon}'
+2\varepsilon^{-2}s\sinh v_{\varepsilon}.
\]
After differentiating this identity $m$ times, any term coming from
$s^{-1}v_{\varepsilon}'$ contains a derivative of $v_{\varepsilon}$
of order between $1$ and $m+1$, while any term coming from
$s\sinh v_{\varepsilon}$ contains either $\sinh v_{\varepsilon}$ or
at least one derivative of $v_{\varepsilon}$ of order at most $m$.
Since all remaining factors are uniformly bounded on $[1/2,1]$,
induction, together with
$\varepsilon^{-N}e^{-c/\varepsilon}\leq
Ce^{-c'/\varepsilon}$, yields
\[
\|v_{\varepsilon}^{(m)}\|_{C^0([1/2,1])}\leq C_m e^{-\frac{c_m}{\varepsilon}},
\]
for any nonnegative integer $m$. Recall
$v_{\varepsilon}(s)=2\bigl(u_\epsilon(r)-\frac12\log r\bigr)$ and $\varepsilon$ is a fixed positive multiple of $\epsilon$, so we get \eqref{eq:exponential-ode}.
\end{proof}

Based on the above estimate, we can prove the following exponential estimate for the mean curvature of the approximate metric.
\begin{theorem}
\label{thm:approximate-mean-curvature}
For any nonnegative integer $k$, there exist constants $C_k,c_k>0$, independent of $\epsilon$, such that for all sufficiently small $\epsilon>0$,
\begin{equation}
\left\|\Lambda_{\omega_\epsilon}\Theta(H_{0,\epsilon})\right\|_{C^k}\leq C_ke^{-c_k/\epsilon}.
\label{eq:mean-curvature-exponential}
\end{equation}

\end{theorem}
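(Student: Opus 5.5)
The plan is to compute the mean curvature of \(H_{0,\epsilon}\) explicitly in the diagonal frame on each gluing annulus and show that every term carries a factor that Proposition~\ref{prop:exponential-ode} makes exponentially small. By the preliminaries, \(\Lambda_{\omega_\epsilon}\Theta(H_{0,\epsilon})\) is supported in the annuli \(\{r_0\le |z_a|\le 4r_0/3\}\times T\). On such an annulus, in the frame of \cite{Fu}, \(H_{0,\epsilon}\) is diagonal:
\[
H_{0,\epsilon}=e^{g_a}\,\mathrm{diag}\bigl(e^{\psi},e^{-\psi}\bigr),\qquad \psi=\tfrac12(\phi_1-\phi_2),
\]
where \(\psi\) depends only on \(z_a\). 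Since the frame is holomorphic and the metric is diagonal, the curvature is \(\bar\partial\partial\) of the log of each diagonal entry. The harmonic factor \(e^{g_a}\) therefore contributes nothing. Because \(\psi\) is independent of the fiber variable, the contraction gives
\[
\Lambda_{\omega_\epsilon}\Theta(H_{0,\epsilon})=c\,\epsilon\,\Delta\psi\;\mathrm{diag}(1,-1)
\]
for a universal constant \(c\). The trace part has already been removed by the conformal normalization. It thus suffices to show \(\|\Delta\psi\|_{C^k}\le C_k e^{-c_k/\epsilon}\) on the annulus, with derivatives measured in the coordinates underlying the \(C^k\)-norm.

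Next I would write \(e^{-u_\epsilon}=r^{-1/2}e^{-w}\) and \(e^{u_\epsilon}=r^{1/2}e^{w}\), where \(w=u_\epsilon-\frac12\log r\). By Proposition~\ref{prop:exponential-ode}, \(\|w\|_{C^k([r_0,2r_0])}\le C_ke^{-c_k/\epsilon}\). This gives
\[
e^{\phi_1}=r^{-1/2}\bigl(1+\rho(e^{-w}-1)\bigr),\qquad e^{\phi_2}=r^{1/2}\bigl(1+\rho(e^{w}-1)\bigr).
\]
Hence
\[
\psi=-\tfrac12\log r+\tfrac12\log\bigl(1+\rho(e^{-w}-1)\bigr)-\tfrac12\log\bigl(1+\rho(e^{w}-1)\bigr).
\]
The term \(-\frac12\log r\) is harmonic on the annulus and drops out. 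The remaining terms are smooth functions \(F(\rho,w)\) with \(F(\rho,0)=0\), so they equal \(w\cdot\widetilde F(\rho,w)\) with \(\widetilde F\) smooth. Since \(\rho\) is a fixed cutoff with \(\epsilon\)-independent \(C^k\) bounds, and \(w\) is exponentially small in every \(C^k\) norm, the Leibniz and chain rules give \(\|\psi+\frac12\log r\|_{C^{k+2}}\le C_ke^{-c_k/\epsilon}\) on the annulus. This in turn bounds \(\Delta\psi\) in \(C^k\).

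The remaining step, which I expect to be the main bookkeeping obstacle, is the choice of norm. If the \(C^k\)-norm is taken with respect to the degenerating metrics \(\omega_\epsilon\), each derivative in the base direction costs a factor \(\epsilon^{1/2}\) (harmless), while fiber derivatives cost a factor \(\epsilon^{-1/2}\). Since \(\psi\) is independent of the fiber, fiber derivatives vanish. The Chern connection in a unitary frame has only base components, so covariant derivatives change the bound by at most powers \(\epsilon^{\pm N}\). These polynomial factors are absorbed by shrinking \(c_k\), using \(\epsilon^{-N}e^{-c/\epsilon}\le C e^{-c'/\epsilon}\). The prefactor \(\epsilon\) in front of \(\Delta\psi\) is similarly harmless. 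Summing over the finitely many branch points gives \eqref{eq:mean-curvature-exponential}.
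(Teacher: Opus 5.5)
\textbf{There is a genuine error in your curvature formula.} The frame of \cite{Fu} on $D_{2r_0}\times T$, in which $H_{0,\epsilon}$ is diagonal, is a smooth frame, not a holomorphic one. In this frame the $\bar\partial$-operator of $V$ has a nontrivial $d\bar w$-component. This is a Higgs-field-type term coming from the spectral data: away from $z_a=0$, the restriction of $V$ to a fiber is the sum of the two degree-zero line bundles labelled by $w^\ast=\pm\sqrt{z_a}$. Consequently $\Theta(H_{0,\epsilon})$ is not just $\bar\partial\partial\log$ of the diagonal entries. It also has a $dw\wedge d\bar w$ component, and contraction with $\omega_\epsilon$ weights that component by $\epsilon^{-1}$. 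The formula the paper uses is
\[
\frac{\sqrt{-1}}{2}\Lambda_{\omega_\epsilon}\widetilde\Theta_{0,\epsilon}=\psi_\epsilon\begin{pmatrix}1&0\\0&-1\end{pmatrix},\qquad
\psi_\epsilon=\frac{\pi^2}{\epsilon}r\left(\varphi_\epsilon-\varphi_\epsilon^{-1}\right)-\frac{\epsilon}{2}\frac{\partial^2}{\partial z_a\partial\bar z_a}\log\varphi_\epsilon ,
\]
where, in your notation, $\varphi_\epsilon=re^{2\psi}$. Your expression $c\,\epsilon\,\Delta\psi$ reproduces only the second term. A quick consistency check shows something must be missing. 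By your formula, $h_{a,\epsilon}=e^{g_a}\operatorname{diag}(e^{-u_\epsilon},e^{u_\epsilon})$ would be Hermitian Yang--Mills if and only if $\Delta u_\epsilon=0$. But $u_\epsilon$ solves $\Delta u_\epsilon=4\pi^2\epsilon^{-2}\bigl(e^{2u_\epsilon}-|z_a|^2e^{-2u_\epsilon}\bigr)$, and that right-hand side is exactly the fiber contribution you dropped. For the same reason, your remark that the Chern connection has only base components in a unitary frame is false. That error only introduces further polynomial factors in the norm bookkeeping, so it is harmless there.

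The repair fits your framework and is then essentially the paper's argument. Your bound $\|\psi+\tfrac12\log r\|_{C^{k+2}}\le C_ke^{-c_k/\epsilon}$ on the annulus is the same as $\|\log\varphi_\epsilon\|_{C^{k+2}}\le C_ke^{-c_k/\epsilon}$. Hence $\|\varphi_\epsilon-1\|_{C^{k+2}}$ and $\|r(\varphi_\epsilon-\varphi_\epsilon^{-1})\|_{C^{k}}$ are exponentially small, and the prefactor $\epsilon^{-1}$ is absorbed via $\epsilon^{-N}e^{-c/\epsilon}\le Ce^{-c'/\epsilon}$. The omission is not cosmetic, though. The missing zeroth-order term is the one that carries a negative power of $\epsilon$. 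Its smallness rests on the exact cancellation $\varphi\equiv1$ for the singular model $u_0=\tfrac12\log r$, which is what makes $h_0$ Hermitian Yang--Mills away from $D_0$. As written, your proof estimates the wrong quantity.
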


\begin{proof}
	On a gluing annulus, set $\eta_\epsilon(r)=u_\epsilon(r)-\frac12\log r$. With respect to the local frame used in \cite[Section~5]{Fu}, let $\widetilde\Theta_{0,\epsilon}$ denote the matrix of $\Theta(H_{0,\epsilon})$. The curvature computation there gives
	\[
	\frac{\sqrt{-1}}{2}\Lambda_{\omega_\epsilon}\widetilde\Theta_{0,\epsilon}
	=\psi_\epsilon\begin{pmatrix}1&0\\0&-1\end{pmatrix},
	\]
	where \[
	\psi_\epsilon=\frac{\pi^2}{\epsilon}r\left(\varphi_\epsilon-\varphi_\epsilon^{-1}\right)
	-\frac{\epsilon}{2}\frac{\partial^2}{\partial z_a\partial\bar z_a}\log\varphi_\epsilon,
	\]
	and
	\[
	\varphi_\epsilon=\frac{1+\rho(r^2)\left(e^{-\eta_\epsilon}-1\right)}
	{1+\rho(r^2)\left(e^{\eta_\epsilon}-1\right)}.
	\]
	By Proposition~\ref{prop:exponential-ode},we have
	\[
	\|\eta_\epsilon\|_{C^{k+2}([r_0,2r_0])}\leq C_ke^{-c_k/\epsilon}.
	\]
	Since the gluing annulus lies in $r\geq r_0>0$, these radial estimates imply the corresponding coordinate $C^{k+2}$ estimates. The denominator defining $\varphi_\epsilon$ is therefore bounded below by a positive constant independent of $\epsilon$, and
	$\|\varphi_\epsilon-1\|_{C^{k+2}}\leq C_ke^{-c_k/\epsilon}$.
	
	Substituting into the formula for $\psi_\epsilon$, together with $\epsilon^{-N}e^{-c/\epsilon}\leq Ce^{-c'/\epsilon}$, we get
	\[
	\|\psi_\epsilon\|_{C^k}\leq C_ke^{-c_k/\epsilon}.
	\]
	Outside the gluing annuli, $\Lambda_{\omega_\epsilon}\Theta(H_{0,\epsilon})=0$. Hence the preceding local formula and the $C^k$-norm convention in \cite[Remark~4]{Fu} yield \eqref{eq:mean-curvature-exponential}.
\end{proof}
\section{The Global $C^0$ Estimate and Higher-Order Estimates}
\label{sec:trace-cn}

We now establish the exponential    $C^0$ decay estimate for $H_\epsilon-\Id$.  We
first derive a $C^0$ bound for the local radial solution $u_{\epsilon}$ and then
use it to compare the approximate metric $H_{0,\epsilon}$ with a fixed
  metric $H_{0,\epsilon_1}$.

\begin{lemma}
\label{lem:u-bound-cn}
There exists a constant $C>1$, independent of $\epsilon$, such that for
all sufficiently small $\epsilon>0$,
\[
C^{-1}\epsilon
\leq e^{u_\epsilon(r)}
\leq C,
\qquad 0\leq r\leq R.
\]
\end{lemma}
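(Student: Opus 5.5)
Since $u_\epsilon'\geq0$ on $[0,R]$ (the consequence of \cite[Proposition~7]{Fu} recorded after \eqref{eq:radial-rescaling}), $u_\epsilon$ is nondecreasing in $r$. Hence
\[
u_\epsilon(0)\leq u_\epsilon(r)\leq u_\epsilon(R)=\tfrac12\log R,\qquad 0\leq r\leq R .
\]
The upper bound therefore holds with $C=\max(1,R^{1/2})$, and only the lower bound $e^{u_\epsilon(0)}\geq C^{-1}\epsilon$ remains.

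For the lower bound I will work in the rescaled variables \eqref{eq:radial-rescaling} with $v_\varepsilon=\overline u_\varepsilon-\log s$. Then $e^{2u_\epsilon(r)}=2r_0e^{\overline u_\varepsilon(s)}$, so it suffices to show $\overline u_\varepsilon(0)\geq 2\log\varepsilon-C$.

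Set $U(s)=e^{\overline u_\varepsilon(s)}$. Two earlier facts feed in. First, $v_\varepsilon>0$ gives $U(s)\geq s$ for $0<s<1$. Second, the proof of Proposition~\ref{prop:exponential-ode} gives the integral identity
\[
2\varepsilon^{-2}\int_0^1 s^2\sinh v_\varepsilon\,ds=v_\varepsilon'(1)+1<1,
\]
or, in unrewritten form, $\int_0^1 s\,\varepsilon^{-2}\bigl(e^{\overline u_\varepsilon}-s^2e^{-\overline u_\varepsilon}\bigr)\,ds=\overline u_\varepsilon'(1)\le C$.

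The key step is a local scaling argument near the origin. Since $\overline u_\varepsilon$ is increasing and $e^{\overline u}-s^2e^{-\overline u}\le e^{\overline u}$, the equation gives $(s\overline u_\varepsilon')'\le \varepsilon^{-2}sU$. The natural scale is $s\sim\sqrt{U(0)}$: there the two terms $e^{\overline u}$ and $s^2e^{-\overline u}$ balance. Set $m=\overline u_\varepsilon(0)$ and $\lambda=e^{m/2}$. On $s\le\lambda$ we have $\overline u_\varepsilon\geq m$, so $e^{\overline u}-s^2e^{-\overline u}\ge e^{\overline u}-1\cdot e^{\,\cdot}$; more simply, $e^{\overline u}-s^2e^{-\overline u}\geq e^{m}-\lambda^2e^{-m}\cdot$, and this comparison will need care. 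The cleaner route is to work with $w(t)=\overline u_\varepsilon(\lambda t)-m$, which satisfies
\[
w''+t^{-1}w'=\frac{\lambda^2e^m}{\varepsilon^2}\bigl(e^{w}-t^2e^{-w}\bigr)\cdot e^{-2m}e^{m}\ldots
\]
Concretely, the coefficient becomes $\varepsilon^{-2}e^{2m}$ after using $\lambda^2=e^m$. Suppose, to the contrary, that $e^{m}\ll\varepsilon^2$, i.e.\ $K:=\varepsilon^{-2}e^{2m}\cdot e^{-m}$ is small. I would then show that $w$ stays small on a long $t$-interval, so $e^{\overline u}\approx e^m< s$ for $s$ slightly beyond $\lambda$. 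This contradicts $U(s)\ge s$ once $s\ge e^m(1+o(1))$, provided that point lies inside the range where $w$ is controlled. I expect this step to be the main obstacle: fixing the scale and constants so that the contradiction closes, and checking that the degeneracy is exactly $e^{u_\epsilon(0)}\sim\epsilon$ (i.e.\ $e^{\overline u(0)}\sim\varepsilon^2$) rather than some other power.

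A more robust alternative, which I will try first, uses the integral identity together with a sub/supersolution argument. Let $f=s\overline u_\varepsilon'$. Then $f(0)=0$ and $f'(s)\le \varepsilon^{-2}sU(s)$. Wherever $U\le 2s$ we get $\overline u'\le 2\varepsilon^{-2}s^{-1}\int_0^s t^2\,dt\le \varepsilon^{-2}s^2$. Since $v_\varepsilon(1/4)\le C\varepsilon^2$, we have $U(1/4)\le \tfrac14(1+C\varepsilon^2)$. Integrating $\overline u'$ backwards from $s=1/4$ down to a point $s_0\sim\varepsilon^{2/3}$ loses only $O(1)$ while $U\le 2s$ persists, which yields $U(s_0)\geq c\,s_0$. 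On $[0,s_0]$ the crude bound $\overline u'\le \varepsilon^{-2}s\,U(s_0)/2$ then gives $\overline u(0)\ge \overline u(s_0)-C\varepsilon^{-2}s_0^2U(s_0)$.

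This yields $U(0)\gtrsim \varepsilon^{2/3}$, which is weaker than needed. If the true answer is $U(0)\sim\varepsilon^{2/3}$, that would correspond to $e^{u_\epsilon}\geq C^{-1}\epsilon^{1/3}$, which is still at least $C^{-1}\epsilon$. So the bootstrap, carried out carefully with the monotonicity of $v_\varepsilon$ to keep $U\le 2s$, should suffice for the stated lemma.
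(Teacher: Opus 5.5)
Your upper bound is the paper's argument (monotonicity of $u_\epsilon$ plus the boundary value). The lower bound, however, has a genuine gap, and the scaling sketch in your first approach is not a proof either.

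The failing step is ``integrating $\overline u_\varepsilon'$ backwards from $s=1/4$ down to $s_0\sim\varepsilon^{2/3}$ loses only $O(1)$ while $U\le 2s$ persists.'' It fails in two ways.
\begin{itemize}
\item Arithmetically, the bound $\overline u_\varepsilon'\le\varepsilon^{-2}s^2$ integrated over $[s_0,1/4]$ costs about $\varepsilon^{-2}/192$, not $O(1)$.
\item More importantly, backward integration yields the wrong inequality: a lower bound $U(s_0)\ge c\,s_0$, which you already have for free from $v_\varepsilon>0$.
\end{itemize}
Your final step $\overline u_\varepsilon(0)\ge\overline u_\varepsilon(s_0)-C\varepsilon^{-2}s_0^2U(s_0)$ actually needs the \emph{upper} bound $U(s_0)\le C s_0$, so that the loss $\varepsilon^{-2}s_0^2U(s_0)$ is $O(1)$. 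Nothing you wrote proves this. The only available upper bound, $U(s_0)\le U(1/4)\le 1/2$, makes the loss of order $\varepsilon^{-2/3}$ and the conclusion useless. Two smaller points. ``Wherever $U\le 2s$'' must be read via $U(t)\le U(s)$ for $t\le s$, since $U(t)\le 2t$ fails near $t=0$. Also, $U(0)\gtrsim\varepsilon^{2/3}$ is stronger, not weaker, than the needed $U(0)\gtrsim\varepsilon^2$.

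There are two ways to close the gap.

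\textbf{The paper's route.} You essentially already have $\overline u_\varepsilon'(s)\le\frac12\varepsilon^{-2}sU(s)$ on all of $[0,1]$, from $s\overline u_\varepsilon'(s)\le\varepsilon^{-2}\int_0^s tU(t)\,dt\le\frac12\varepsilon^{-2}U(s)s^2$. Instead of freezing $U$ at $U(s_0)$, divide by $U$ to get $-\frac{d}{ds}e^{-\overline u_\varepsilon}\le\frac12\varepsilon^{-2}s$. Integrating over $[0,1]$ with $\overline u_\varepsilon(1)=0$ gives $e^{-\overline u_\varepsilon(0)}\le 1+\frac14\varepsilon^{-2}$, i.e.\ $U(0)\ge c\,\varepsilon^2$. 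This is exactly the lemma, with no bootstrap or inner-scale analysis.

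\textbf{Rescuing your route.} Use the integral identity you quoted but never used, together with the monotonicity of $v_\varepsilon$, as in the proof of Proposition~\ref{prop:exponential-ode}. This gives $\frac{s_0^3}{3}\sinh v_\varepsilon(s_0)\le\frac{\varepsilon^2}{2}$. Take $s_0=K\varepsilon^{2/3}$ with $K^3\ge 2$. Then $\sinh v_\varepsilon(s_0)\le\frac{3}{2K^3}\le\frac34=\sinh(\log 2)$, so $v_\varepsilon(s_0)\le\log 2$, i.e.\ $s_0\le U(s_0)\le 2s_0$. Your crude bound then gives $\overline u_\varepsilon(0)\ge\log s_0-K^3/2$, so $U(0)\gtrsim\varepsilon^{2/3}$. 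Equivalently, $e^{u_\epsilon(0)}\gtrsim\epsilon^{1/3}$, which is sharper than the lemma and matches the natural inner scale $s\sim\varepsilon^{2/3}$ you identified.
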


\begin{proof}
By \eqref{eq:fu-ode} and $u_\epsilon'(0)=0$, we have
\[
r u_\epsilon'(r)
=\frac{4\pi^2}{\epsilon^2}\int_0^r
 t\left(e^{2u_\epsilon(t)}-t^2e^{-2u_\epsilon(t)}\right)\,dt
\leq\frac{2\pi^2}{\epsilon^2}r^2e^{2u_\epsilon(r)},
\]
where the last inequality follows from the monotonicity of $u_\epsilon$.
Thus
\[
e^{-2u_\epsilon(r)}u_\epsilon'(r)\leq
2\pi^2\epsilon^{-2}r.
\]
Integrating the above over $[0,R]$ gives
\[
\frac12\left(e^{-2u_\epsilon(0)}-e^{-2u_\epsilon(R)}\right)
=\int_0^R e^{-2u_\epsilon(r)}u_\epsilon'(r)\,dr
\leq\frac{\pi^2R^2}{\epsilon^2}.
\]
Noting that $e^{-2u_\epsilon(R)}=R^{-1}$, we obtain
\[e^{-2u_\epsilon(0)}\leq R^{-1}+2\pi^2R^2\epsilon^{-2}
\leq C\epsilon^{-2},\]
 Hence $e^{u_\epsilon(0)}\geq C^{-1}\epsilon.$
The monotonicity of $u_\epsilon$ and the boundary condition then yield
\[
C^{-1}\epsilon\leq e^{u_\epsilon(0)}\leq e^{u_\epsilon(r)}
\leq e^{u_\epsilon(R)}=R^{1/2}\leq C.
\]
\end{proof}

Fix a sufficiently small $\epsilon_1>0$ and set
$H_*:=H_{0,\epsilon_1}$. Let
\[
\omega:=\omega_B+\omega_T
\]
denote the fixed standard product K\"ahler metric on $X=B\times T$.
We write
$dV:=\omega^2/2=\omega_\epsilon^2/2$ and then
$\int_XdV=1$.
By the above estimate in Lemma \ref{lem:u-bound-cn} for $u_{\epsilon}$, we can compare $H_{0,\epsilon}$ with the fixed metric $H_{*}$.
\begin{proposition}
\label{prop:h0-comparison-cn}
There exists a constant $C>1$, independent of $\epsilon$, such that for
all sufficiently small $\epsilon>0$,
\begin{equation}
C^{-1}\epsilon H_*
\leq H_{0,\epsilon}
\leq C\epsilon^{-1}H_*.
\label{eq:fu-h0-comparison-cn}
\end{equation}
In particular, for any $\alpha\in A^1(\End(V))$,
\begin{equation}
\|\alpha\|_{L^2(H_*,\omega)}^2
\leq
C\epsilon^{-3}
\|\alpha\|_{L^2(H_{0,\epsilon},\omega_\epsilon)}^2.
\label{eq:normcomp}
\end{equation}
\end{proposition}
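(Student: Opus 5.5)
The plan is to compare $H_{0,\epsilon}$ and $H_*=H_{0,\epsilon_1}$ region by region in the common local frames from Section~2, where both metrics are diagonal. The $L^2$ inequality \eqref{eq:normcomp} will then follow by a pointwise linear-algebra estimate combined with a comparison of $\omega$ and $\omega_\epsilon$ on $1$-forms.

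\emph{Step 1: away from the branch disks.} Outside $\bigcup_a\{|z_a|<4r_0/3\}\times T$ we have $\rho=0$, so $h_\epsilon=h_0$. Moreover $\phi_1+\phi_2=0$ there, so $H_{0,\epsilon}=h_0=H_*$ and \eqref{eq:fu-h0-comparison-cn} is trivial in this region.

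\emph{Step 2: on a branch disk.} In the frame of \cite{Fu},
\[
H_{0,\epsilon}=e^{g_a}e^{-\frac12(\phi_1+\phi_2)}\begin{pmatrix}e^{\phi_1}&0\\0&e^{\phi_2}\end{pmatrix},
\]
and $H_*$ has the same form with $\epsilon_1$ in place of $\epsilon$. Since both are diagonal, it suffices to compare each diagonal entry with its counterpart for $\epsilon_1$. The factor $e^{g_a}$ is fixed and cancels.

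\begin{itemize}
\item On the inner disk $\{|z_a|\le r_0\}$ we have $\rho=1$, so $\phi_1+\phi_2=0$ and the entries are $e^{\mp u_\epsilon}$. By Lemma~\ref{lem:u-bound-cn}, $e^{u_\epsilon}\in[C^{-1}\epsilon,C]$ and $e^{-u_\epsilon}\in[C^{-1},C\epsilon^{-1}]$. For the fixed $\epsilon_1$, $e^{\pm u_{\epsilon_1}}$ is bounded above and below by constants. Hence each ratio of entries lies in $[C^{-1}\epsilon,C\epsilon^{-1}]$.
\item On the gluing annulus $\{r_0\le|z_a|\le 4r_0/3\}$, Proposition~\ref{prop:exponential-ode} gives $e^{\mp u_\epsilon}=r^{\mp1/2}(1+O(e^{-c/\epsilon}))$. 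Thus $e^{\phi_1},e^{\phi_2}$ and $e^{-\frac12(\phi_1+\phi_2)}$ are bounded above and below by constants independent of $\epsilon$, because $r\ge r_0$ and $\rho\in[0,1]$ keeps the convex combinations positive. The ratios are therefore uniformly bounded there, which is even better than needed.
\end{itemize}

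This proves \eqref{eq:fu-h0-comparison-cn}. The only real input is the lower bound $e^{u_\epsilon(0)}\ge C^{-1}\epsilon$, which is where the loss of $\epsilon$ comes from.

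\emph{Step 3: the norm inequality.} Write $H_{0,\epsilon}=H_*P$ with $P$ self-adjoint and positive, with spectrum in $[C^{-1}\epsilon,C\epsilon^{-1}]$. For an endomorphism $A$ we have $|A|^2_{H}=\Tr(AA^{*_H})$. Using $A^{*_{H}}=P^{-1}A^{*_{H_*}}P$ and diagonalizing $P$ orthonormally for $H_*$, one gets $|A|^2_{H_*}\le \|P\|\,\|P^{-1}\|\,|A|^2_{H_{0,\epsilon}}\le C\epsilon^{-2}|A|^2_{H_{0,\epsilon}}$. For the form part, $\omega_\epsilon=\epsilon^{-1}\omega_B+\epsilon\omega_T$ gives
\[
|dz|^2_{\omega_\epsilon}=\epsilon|dz|^2_\omega,\qquad |dw|^2_{\omega_\epsilon}=\epsilon^{-1}|dw|^2_\omega,
\]
so $|\beta|^2_\omega\le\epsilon^{-1}|\beta|^2_{\omega_\epsilon}$ on $1$-forms. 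Combining the two pointwise bounds gives $|\alpha|^2_{H_*,\omega}\le C\epsilon^{-3}|\alpha|^2_{H_{0,\epsilon},\omega_\epsilon}$. Integrating against $dV=\omega^2/2=\omega_\epsilon^2/2$, which are equal, yields \eqref{eq:normcomp}.

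The main obstacle is bookkeeping rather than analysis. One must check that the endomorphism and form factors combine multiplicatively; they do, since the decomposition of $\alpha$ into $dz$ and $dw$ components is orthogonal for both $\omega$ and $\omega_\epsilon$. One must also confirm that the annulus entries stay uniformly positive.
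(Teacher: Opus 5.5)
Your proposal is correct and follows essentially the same route as the paper. Both arguments compare the two metrics in the diagonal local frame, using Lemma~\ref{lem:u-bound-cn} on the inner disk and the identity $H_{0,\epsilon}=h_0=H_*$ outside the gluing region. Both then use the pointwise bounds $|A|^2_{H_*}\le C\epsilon^{-2}|A|^2_{H_{0,\epsilon}}$ and $|\beta|^2_\omega\le\epsilon^{-1}|\beta|^2_{\omega_\epsilon}$ and integrate against the common volume form. The differences are cosmetic: you compare the diagonal entries of $H_{0,\epsilon}$ and $H_*$ directly and invoke Proposition~\ref{prop:exponential-ode} on the gluing annulus, whereas the paper applies the crude bounds of Lemma~\ref{lem:u-bound-cn} there too and uses that $H_*$ is uniformly bounded in the fixed frames.
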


\begin{proof}
Fix a branch point $\xi_a$, let $z_a$ be the local coordinate centered
at $\xi_a$ chosen in \cite[Section~2]{Fu}, and write $r=|z_a|$.
Recall the cut-off function $\rho$ and the functions
$\phi_1,\phi_2$ introduced in the construction of $H_{0,\epsilon}$.

Let $(\hat\mu_1^a,\hat\mu_2^a)$ be the smooth frame of
$V|_{D_{2r_0}\times T}$ chosen in \cite[Section~2]{Fu}. By
\cite[equations~(5.4)--(5.5) and the conformal normalization preceding
equation~(5.6)]{Fu}, the matrix of $H_{0,\epsilon}$ in this frame is
\begin{equation}
e^{g_a(z_a)}
\begin{pmatrix}
e^{(\phi_1-\phi_2)/2}&0\\
0&e^{(\phi_2-\phi_1)/2}
\end{pmatrix}.
\label{eq:local-h0-cn}
\end{equation}
Here $g_a$ is the real-valued harmonic function appearing in
\cite[equation~(5.1)]{Fu} and is independent of $\epsilon$.
Consequently, on the fixed disk,
$c\leq e^{g_a(z_a)}\leq C$ for constants $c,C>0$ independent of
$\epsilon$.

For $0\leq r\leq r_0$, one has
$e^{\phi_1}=e^{-u_\epsilon}$ and
$e^{\phi_2}=e^{u_\epsilon}$. On the transition region
$r_0\leq r\leq3r_0/2$, the functions $r^{1/2}$ and $r^{-1/2}$ have
uniform positive upper and lower bounds. Lemma~\ref{lem:u-bound-cn}
therefore yields
\begin{align}
c\leq e^{\phi_1}\leq C\epsilon^{-1},
\qquad&
c\epsilon\leq e^{\phi_2}\leq C,\notag\\
c\leq e^{(\phi_1-\phi_2)/2}\leq C\epsilon^{-1},
\qquad&
c\epsilon\leq e^{(\phi_2-\phi_1)/2}\leq C
\label{eq:diagonal-bounds-cn}
\end{align}
on $0\leq r\leq3r_0/2$.

Since $H_*$ is a fixed smooth Hermitian metric, the matrices of $H_*$
and $H_*^{-1}$ are uniformly bounded in the finitely many fixed smooth
frames under consideration. It follows from
\eqref{eq:local-h0-cn} and \eqref{eq:diagonal-bounds-cn} that
$c\epsilon H_*\leq H_{0,\epsilon}\leq C\epsilon^{-1}H_*$ on
$\{|z_a|\leq3r_0/2\}\times T$. Since there are only finitely many
branch points, the same estimate holds uniformly on all branch disks.

Outside the branch disks, the cut-off function vanishes and
$H_{0,\epsilon}=h_0$. Since $H_*=H_{0,\epsilon_1}=h_0$ there,
\eqref{eq:fu-h0-comparison-cn} follows on all of $X$.

It remains to prove \eqref{eq:normcomp}. At any $x\in X$, choose an
$H_*$-unitary frame in which
$H_{0,\epsilon}=\operatorname{diag}(d_1,d_2)$. By
\eqref{eq:fu-h0-comparison-cn},
$C^{-1}\epsilon\leq d_i\leq C\epsilon^{-1}$ for $i=1,2$. If $A=(A^i{}_j)\in\End(V_x)$, then
\[
|A|_{H_{0,\epsilon}}^2
=\sum_{i,j=1}^2\frac{d_i}{d_j}|A^i{}_j|^2
\geq c\epsilon^2|A|_{H_*}^2.
\]
Moreover, any one-form $\eta$ satisfies
$|\eta|_\omega^2\leq C\epsilon^{-1}|\eta|_{\omega_\epsilon}^2$.
Thus, for any
$\alpha\in A^1(\End(V))$, 
\[
|\alpha|_{H_*,\omega}^2\leq C\epsilon^{-3}
|\alpha|_{H_{0,\epsilon},\omega_\epsilon}^2.
\]
 Since the volume forms agree, integration
proves \eqref{eq:normcomp}.
\end{proof}

By Theorem~\ref{thm:approximate-mean-curvature}, after choosing
$C_0>0$ sufficiently large, there exists $c_0>0$, independent of
$\epsilon$, such that $\delta_\epsilon:=C_0e^{-c_0/\epsilon}$ bounds in
absolute value all the eigenvalues
of the endomorphism
$K_\epsilon:=\frac{\sqrt{-1}}2
\Lambda_{\omega_\epsilon}\Theta(H_{0,\epsilon})$.

We also recall the degenerating Sobolev inequality from
\cite[Lemma~16]{Fu}.

\begin{lemma}
\label{sobolev}
There exists a constant $C>0$, independent of $\epsilon$, such that
any smooth function $v$ satisfies
\begin{equation}
\|v\|_{L^4(dV)}^2
\leq
\|v\|_{L^2(dV)}^2
+
C\epsilon^{-10}
\int_X|dv|_{\omega_\epsilon}^2\,dV.
\label{eq:fu-sobolev-upper-cn}
\end{equation}
\end{lemma}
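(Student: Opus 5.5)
The plan is to split $v$ into its mean and its oscillation, control the oscillation by the Sobolev--Poincar\'e inequality of the \emph{fixed} metric $\omega$, and then compare $\|v\|_{L^4}^4$ with the square of the right-hand side, so that the coefficient in front of $\|v\|_{L^2(dV)}^2$ is exactly $1$. (This is \cite[Lemma~16]{Fu}; the argument below gives it directly.)

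\emph{Step 1: splitting.} Since $\int_X dV=1$, write $v=\bar v+w$ with $\bar v=\int_X v\,dV$ and $\int_X w\,dV=0$. Then
\[
\|v\|_{L^2}^2=\bar v^2+\|w\|_{L^2}^2,\qquad
\|v\|_{L^4}^4=\bar v^4+6\bar v^2\|w\|_{L^2}^2+4\bar v\int_X w^3\,dV+\int_X w^4\,dV,
\]
where the second identity uses $\int_X w\,dV=0$.

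\emph{Step 2: oscillation bounds.} On the fixed real $4$-dimensional flat torus $(X,\omega)$, the Sobolev exponent is $2^*=4$, so the Sobolev--Poincar\'e inequality gives
\[
\|w\|_{L^2}^2\le C\,\|w\|_{L^4}^2\le C\int_X|dv|_\omega^2\,dV
\]
for mean-zero $w$. The norms of one-forms compare as $|\eta|_\omega^2\le\max(\epsilon,\epsilon^{-1})\,|\eta|_{\omega_\epsilon}^2=\epsilon^{-1}|\eta|_{\omega_\epsilon}^2$, exactly as in the proof of Proposition~\ref{prop:h0-comparison-cn}. Setting $D:=\int_X|dv|_{\omega_\epsilon}^2\,dV$, we therefore get
\[
\|w\|_{L^2}^2,\ \|w\|_{L^4}^2\le A\epsilon^{-1}D,
\]
with $A$ independent of $\epsilon$.

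\emph{Step 3: comparing fourth powers.} Put $K=M\epsilon^{-1}D$, with $M$ to be chosen. It suffices to show $\|v\|_{L^4}^4\le(\|v\|_{L^2}^2+K)^2$. The right side is at least $\bar v^4+2\bar v^2\|w\|_{L^2}^2+2\bar v^2K+K^2$. After cancelling, it remains to bound
\[
4\bar v^2\|w\|_{L^2}^2+4|\bar v|\int_X|w|^3\,dV+\int_X w^4\,dV
\]
by $2\bar v^2K+K^2$. For the cubic term, H\"older gives $\int_X|w|^3\,dV\le\|w\|_{L^2}\|w\|_{L^4}^2\le(A\epsilon^{-1}D)^{3/2}$. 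Young's inequality then splits $4|\bar v|(A\epsilon^{-1}D)^{3/2}\le 2\bar v^2A\epsilon^{-1}D+2(A\epsilon^{-1}D)^2$. So the quantity to be bounded is at most $6A\,\bar v^2\epsilon^{-1}D+3A^2(\epsilon^{-1}D)^2$, which is at most $2\bar v^2K+K^2$ once $M\ge\max(3A,2A)$.

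\emph{Conclusion.} This yields $\|v\|_{L^4}^2\le\|v\|_{L^2}^2+M\epsilon^{-1}D$. Since $\epsilon^{-1}\le\epsilon^{-10}$ for $\epsilon<1$, \eqref{eq:fu-sobolev-upper-cn} follows. The only delicate point is keeping the coefficient of $\|v\|_{L^2}^2$ equal to $1$. The naive estimate via Minkowski, $(|\bar v|+\|w\|_{L^4})^2$, produces a cross term that cannot be absorbed. The fourth-power comparison in Step 3 handles this because the cross terms are all of size $\bar v^2\epsilon^{-1}D$ or $(\epsilon^{-1}D)^2$, and both are dominated by the expansion of $(\|v\|_{L^2}^2+K)^2$.
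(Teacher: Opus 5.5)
Your proof is correct, and in fact it proves more than the statement claims. The paper gives no argument for this lemma; it imports it from \cite[Lemma~16]{Fu}. Your route is therefore a self-contained alternative rather than a reconstruction of the paper's proof.

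It rests on two ingredients:
\begin{itemize}
\item The pointwise comparison $|dv|_\omega^2\le\epsilon^{-1}|dv|_{\omega_\epsilon}^2$ for $\epsilon\le1$. This reduces everything to the Sobolev--Poincar\'e inequality of the fixed flat $4$-torus, which has critical exponent $2^*=4$.
\item The cancellation $4\bar v^3\int_X w\,dV=0$ after splitting off the mean. Because of it, every cross term in $\|v\|_{L^4}^4$ is at least quadratic in $w$, so it can be absorbed into $(\|v\|_{L^2}^2+K)^2$ while the coefficient of $\|v\|_{L^2}^2$ stays exactly $1$.
\end{itemize}

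I checked the algebra. The leftover is at most $6\bar v^2E+3E^2$ with $E=A\epsilon^{-1}D$, so you need $2M\ge 6A$ and $M^2\ge 3A^2$. Hence $M\ge3A$ suffices, and your ``$\max(3A,2A)$'' should simply read $3A$.

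What your route buys is the factor $\epsilon^{-1}$ in place of $\epsilon^{-10}$, and this is sharp. For $v$ depending only on $z$ one has $|dv|_{\omega_\epsilon}^2=\epsilon|dv|_\omega^2$, so any smaller power would force $\|v\|_{L^4}\le\|v\|_{L^2}$ for nonconstant $v$, which is false. The citation keeps the paper short but leaves the $\epsilon$-dependence opaque. For the paper's purposes the exponent does not matter: in Lemma~\ref{lem:L1-Linfty-cn} any polynomial loss in $q_\epsilon=C\epsilon^{-10}\delta_\epsilon$ is absorbed by $\delta_\epsilon=C_0e^{-c_0/\epsilon}$.

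Two small points you should make explicit:
\begin{itemize}
\item Expanding $(\bar v+w)^4$ presupposes that $v$ is real-valued. That is all that is needed, since the lemma is only applied to $v=u^p$ with $u=\Tr H_\epsilon>0$.
\item State $\epsilon\le1$ at the outset. Both $\max(\epsilon,\epsilon^{-1})=\epsilon^{-1}$ and $\epsilon^{-1}\le\epsilon^{-10}$ use it.
\end{itemize}
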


Since $H_{1,\epsilon}$ are Hermitian Yang--Mills metrics, by \cite{UY}, we have the following lemma. We use the convention
$\frac{\sqrt{-1}}2\Lambda_{\omega_\epsilon}\partial\bar\partial f
=\frac14\Delta_\epsilon f$.
\begin{lemma}
	\label{lem:fu-trace-cn}
	On $X$, one has
	\begin{equation}
		\Delta_\epsilon\Tr H_\epsilon
		\geq
		-4\delta_\epsilon\Tr H_\epsilon,
		\label{eq:trace-subsolution-cn}
	\end{equation}
	and
	\begin{equation}
		\int_X
		|\bar\partial H_\epsilon|_{H_{0,\epsilon},\omega_\epsilon}^2\,dV
		\leq
		2\delta_\epsilon
		\left(\sup_X\Tr H_\epsilon\right)
		\int_X\Tr H_\epsilon\,dV.
		\label{eq:integrated-energy-cn}
	\end{equation}
\end{lemma}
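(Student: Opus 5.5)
We derive both inequalities from the standard Uhlenbeck--Yau identity that compares the Chern curvatures of \(H_{1,\epsilon}=H_{0,\epsilon}H_\epsilon\) and \(H_{0,\epsilon}\). Write \(\bar\partial\) for the \((0,1)\)-part and \(\partial_0\) for the \((1,0)\)-part of the Chern connection of \(H_{0,\epsilon}\) acting on \(\End(V)\). Then
\[
\Theta(H_{1,\epsilon})-\Theta(H_{0,\epsilon})
=\bar\partial\bigl(H_\epsilon^{-1}\partial_0 H_\epsilon\bigr).
\]
We contract with \(\omega_\epsilon\) and use \(\Lambda_{\omega_\epsilon}\Theta(H_{1,\epsilon})=0\). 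With the paper's normalization \(K_\epsilon=\frac{\sqrt{-1}}2\Lambda_{\omega_\epsilon}\Theta(H_{0,\epsilon})\), this gives
\[
\frac{\sqrt{-1}}2\Lambda_{\omega_\epsilon}\bar\partial\partial_0 H_\epsilon
=-H_\epsilon K_\epsilon+\frac{\sqrt{-1}}2\Lambda_{\omega_\epsilon}\bigl(\bar\partial H_\epsilon\, H_\epsilon^{-1}\wedge\partial_0 H_\epsilon\bigr).
\]
Taking the trace and using \(\Tr\bar\partial\partial_0=\bar\partial\partial\Tr\) together with the stated convention \(\frac{\sqrt{-1}}2\Lambda\partial\bar\partial f=\frac14\Delta_\epsilon f\), we obtain
\[
\tfrac14\Delta_\epsilon\Tr H_\epsilon
=\Tr(H_\epsilon K_\epsilon)+\bigl|\bar\partial H_\epsilon\,H_\epsilon^{-1/2}\bigr|^2_{H_{0,\epsilon},\omega_\epsilon}.
\]
Here the last term is the standard nonnegative quantity, since \(H_\epsilon\) is \(H_{0,\epsilon}\)-self-adjoint and positive, so \(H_\epsilon^{-1/2}\) makes sense. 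The signs must be fixed carefully, because the orientation of \(\partial\) versus \(\bar\partial\) determines which side carries \(-\Delta\). This is the step where I expect the main care to be needed: the constants \(4\) and \(2\), and the sign of the quadratic term, have to come out right, and they depend on the convention \(\frac{\sqrt{-1}}2\Lambda\) and on the norm convention for one-forms.

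To get \eqref{eq:trace-subsolution-cn}, note that \(H_\epsilon\) is positive and \(K_\epsilon\) is self-adjoint with eigenvalues bounded by \(\delta_\epsilon\). Hence \(|\Tr(H_\epsilon K_\epsilon)|=|\Tr(H_\epsilon^{1/2}K_\epsilon H_\epsilon^{1/2})|\leq\delta_\epsilon\Tr H_\epsilon\). Dropping the nonnegative gradient term then gives \(\Delta_\epsilon\Tr H_\epsilon\geq-4\delta_\epsilon\Tr H_\epsilon\).

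For \eqref{eq:integrated-energy-cn}, we integrate the identity over the compact manifold \(X\). The Laplacian integrates to zero, so
\[
\int_X\bigl|\bar\partial H_\epsilon H_\epsilon^{-1/2}\bigr|^2dV\leq\delta_\epsilon\int_X\Tr H_\epsilon\,dV.
\]
It remains to compare \(|\bar\partial H_\epsilon|^2\) with \(|\bar\partial H_\epsilon H_\epsilon^{-1/2}|^2\). Pointwise we have
\[
|\bar\partial H_\epsilon|^2\leq|H_\epsilon^{1/2}|_{op}^2\,|\bar\partial H_\epsilon H_\epsilon^{-1/2}|^2,
\]
and \(|H_\epsilon^{1/2}|^2_{op}=\lambda_{\max}(H_\epsilon)\leq\Tr H_\epsilon\leq\sup_X\Tr H_\epsilon\). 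This yields the bound with constant \(1\). The factor \(2\) in \eqref{eq:integrated-energy-cn} is slack, which absorbs the normalization of \(|\cdot|_{\omega_\epsilon}\) on \((0,1)\)-forms (for example, a factor \(2\) between \(|d\bar z|^2\) conventions). Beyond checking the sign and normalization conventions in the first step, everything else is routine.
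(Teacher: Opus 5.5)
Your proposal is correct and follows the paper's own proof. It uses the same trace identity $\frac14\Delta_\epsilon\Tr H_\epsilon=Q_\epsilon+\Tr(K_\epsilon H_\epsilon)$ with $Q_\epsilon\geq0$, the same bound $|\Tr(K_\epsilon H_\epsilon)|\leq\delta_\epsilon\Tr H_\epsilon$, integration over $X$, and the same comparison $H_\epsilon\leq(\sup_X\Tr H_\epsilon)\Id$ to pass from the weighted quadratic term to $|\bar\partial H_\epsilon|^2$.

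One small correction: the factor $2$ in the integrated energy estimate is not slack. In the paper's norm convention, $|d\bar z|^2_{\omega_\epsilon}=2\epsilon$, so $Q_\epsilon=\frac12|\bar\partial H_\epsilon H_\epsilon^{-1/2}|^2_{H_{0,\epsilon},\omega_\epsilon}$ rather than $|\bar\partial H_\epsilon H_\epsilon^{-1/2}|^2_{H_{0,\epsilon},\omega_\epsilon}$. Your argument therefore produces exactly the constant $2$, not $1$.
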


\begin{proof}
	Here $\partial_{H_{0,\epsilon}}H_\epsilon$ denotes the $(1,0)$ part of
	the connection induced on $\End(V)$ by the Chern connection of
	$H_{0,\epsilon}$, applied to $H_\epsilon$; see
	\cite[Section~1.9]{Siu1987}.
	
	The standard Laplacian identity for the trace of the relative
	endomorphism, equivalently the invariant form of
	\cite[equation~(6.4)]{Fu}, together with the matrix conventions in
	\cite[equations~(3.8)--(3.9)]{Fu},
	$\Lambda_{\omega_\epsilon}\Theta(H_{1,\epsilon})=0$, and the convention
	$\frac{\sqrt{-1}}2\Lambda_{\omega_\epsilon}\partial\bar\partial f
	=\frac14\Delta_\epsilon f$, gives
	\begin{equation}
		\frac14\Delta_\epsilon\Tr H_\epsilon
		=
		Q_\epsilon+\Tr(K_\epsilon H_\epsilon),
		\qquad
		Q_\epsilon
		:=
		-\frac{\sqrt{-1}}2
		\Lambda_{\omega_\epsilon}
		\Tr\left(
		\bar\partial H_\epsilon H_\epsilon^{-1}
		\wedge
		\partial_{H_{0,\epsilon}}H_\epsilon
		\right).
		\label{eq:fu-trace-cn}
	\end{equation}
	Both $Q_\epsilon$ and $\Tr(K_\epsilon H_\epsilon)$ are intrinsically
	defined smooth real-valued functions on $X$.
	
	Fix $x_0\in X$ and choose an $H_{0,\epsilon}$-normal local holomorphic
	frame near $x_0$. Thus
	$\widetilde H_{0,\epsilon}(x_0)=I$ and
	$\partial\widetilde H_{0,\epsilon}(x_0)=0$. Since $H_\epsilon$ is
	self-adjoint with respect to $H_{0,\epsilon}$, at $x_0$ one has
	$\widetilde H_{\epsilon,\bar z}
	=\widetilde H_{\epsilon,z}^*$ and
	$\widetilde H_{\epsilon,\bar w}
	=\widetilde H_{\epsilon,w}^*$.
	
	Since $H_\epsilon$ is positive definite, at $x_0$ one also has
	\[
	\widetilde H_\epsilon(x_0)
	\leq
	\Tr H_\epsilon(x_0)I
	\leq
	\left(\sup_X\Tr H_\epsilon\right)I,
	\]
	and hence
	\begin{align*}
		Q_\epsilon
		&=
		\epsilon\Tr\left(
		\widetilde H_{\epsilon,z}^*
		\widetilde H_\epsilon^{-1}
		\widetilde H_{\epsilon,z}
		\right)
		+
		\epsilon^{-1}\Tr\left(
		\widetilde H_{\epsilon,w}^*
		\widetilde H_\epsilon^{-1}
		\widetilde H_{\epsilon,w}
		\right)\\
		&\geq
		\left(\sup_X\Tr H_\epsilon\right)^{-1}
		\left[
		\epsilon\Tr\left(
		\widetilde H_{\epsilon,z}^*
		\widetilde H_{\epsilon,z}
		\right)
		+
		\epsilon^{-1}\Tr\left(
		\widetilde H_{\epsilon,w}^*
		\widetilde H_{\epsilon,w}
		\right)
		\right]\\
		&=
		\frac{1}{2}
		\left(\sup_X\Tr H_\epsilon\right)^{-1}
		|\bar\partial H_\epsilon|_{H_{0,\epsilon},\omega_\epsilon}^2.
	\end{align*}
	Thus
	\begin{equation}
		|\bar\partial H_\epsilon|_{H_{0,\epsilon},\omega_\epsilon}^2
		\leq
		2\bigl(\sup_X\Tr H_\epsilon\bigr)Q_\epsilon.
		\label{eq:weighted-first-order-cn}
	\end{equation}
	
	 By the definition of $\delta_\epsilon$,
	$-\delta_\epsilon\Id\leq K_\epsilon\leq\delta_\epsilon\Id$. Since
	$H_\epsilon$ is positive definite, we have
	\[
	|\Tr(K_\epsilon H_\epsilon)|\leq\delta_\epsilon\Tr H_\epsilon.
	\]
	Combining this estimate with \eqref{eq:fu-trace-cn} and
	$Q_\epsilon\geq0$ proves \eqref{eq:trace-subsolution-cn}.
	
	Finally, integration of \eqref{eq:fu-trace-cn} over $X$ gives
	\[
	\int_XQ_\epsilon\,dV
	=
	-\int_X\Tr(K_\epsilon H_\epsilon)\,dV
	\leq
	\delta_\epsilon\int_X\Tr H_\epsilon\,dV.
	\]
	Together with
	\eqref{eq:weighted-first-order-cn}, this proves
	\eqref{eq:integrated-energy-cn}.
\end{proof}

Since $\delta_{\epsilon}=C_0e^{-\frac{c_0
	}{\epsilon}}$, we can control the $L^\infty$ norm of $\Tr H_\epsilon$ by its
$L^1$ norm in the following stronger form.

\begin{lemma}
\label{lem:L1-Linfty-cn}
There exist constants $c>0$ and $\epsilon_0>0$, independent of
$\epsilon$, such that for any $0<\epsilon\leq\epsilon_0$,
\begin{equation}
\|\Tr H_\epsilon\|_{L^\infty}
\leq
\left(1+e^{-c/\epsilon}\right)
\|\Tr H_\epsilon\|_{L^1(dV)}.
\label{eq:L1-Linfty-cn}
\end{equation}
\end{lemma}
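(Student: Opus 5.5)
Write $f=\Tr H_\epsilon>0$. The plan is a Moser iteration for the subsolution inequality \eqref{eq:trace-subsolution-cn}, driven by the degenerate Sobolev inequality of Lemma~\ref{sobolev}. The key point is that the potential $4\delta_\epsilon$ is exponentially small: the polynomial losses $\epsilon^{-10}$ from the Sobolev constant then cost only a factor $1+O(\epsilon^{-N}\delta_\epsilon)$ at each step rather than a fixed constant. We cannot bound $\|f\|_{L^\infty}$ by $C\|f\|_{L^1}$ and then absorb $C$. Instead the iteration has to be arranged so that every step multiplies by a factor $1+\eta_j$ with $\prod_j(1+\eta_j)\le 1+e^{-c/\epsilon}$.

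\textbf{Step 1 (energy inequality).} For $p\ge1$, multiply $\Delta_\epsilon f\ge -4\delta_\epsilon f$ by $f^{p}$ and integrate by parts against $dV$. Since $\int_X\Delta_\epsilon(\cdot)\,dV$ is, up to a fixed positive constant $\kappa$, $-\int\langle d\cdot,d\cdot\rangle_{\omega_\epsilon}$, this gives
\[
\frac{4p}{(p+1)^2}\int_X|d f^{(p+1)/2}|^2_{\omega_\epsilon}dV\le C\delta_\epsilon\int_X f^{p+1}dV .
\]
Insert this into Lemma~\ref{sobolev} with $v=f^{(p+1)/2}$:
\[
\|f\|_{L^{2(p+1)}}^{p+1}\le\bigl(1+C\epsilon^{-10}(p+1)\delta_\epsilon\bigr)\|f\|_{L^{p+1}}^{p+1}.
\]
Taking $q=p+1=2^j$, $j\ge1$, we obtain
\[
\|f\|_{L^{2q}}\le (1+C\epsilon^{-10}q\,\delta_\epsilon)^{1/q}\|f\|_{L^q}\le \exp(C\epsilon^{-10}\delta_\epsilon)\|f\|_{L^q}.
\]

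\textbf{Step 2 (where the naive iteration fails).} Iterating Step~1 over $q=2^j$ gives a product of $\exp(C\epsilon^{-10}\delta_\epsilon)$ over infinitely many $j$, and this diverges. So the iteration cannot run to infinity with this crude per-step bound. There are two remedies, and I would try the first.

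\emph{(a) Truncate the iteration.} Stop at a finite $q_*=2^{J}$ with $J\sim c'/\epsilon$, which is still compatible with $\epsilon^{-10}q_*\delta_\epsilon\le e^{-c''/\epsilon}$. Then
\[
\|f\|_{L^{q_*}}\le (1+e^{-c/\epsilon})\|f\|_{L^2}.
\]
Pass from $L^{q_*}$ to $L^\infty$ using a crude a priori bound: $\sup f\le C_\epsilon\|f\|_{L^{q_*}}$ with $C_\epsilon$ polynomial in $\epsilon^{-1}$ raised to the power $1/q_*$, hence $1+O(e^{-c/\epsilon})$. This a priori bound comes from standard local Moser on balls in the rescaled, uniformly geometric coordinates, e.g.\ $w\mapsto \epsilon^{-1/2}w$-type rescalings, or from the factor $\epsilon^{-10}$ in Lemma~\ref{sobolev} with a constant-growth iteration, which yields $\|f\|_\infty\le C\epsilon^{-N}\|f\|_{L^2}$.

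\emph{(b) Use the energy term directly.} Alternatively, replace the crude per-step factor by one that genuinely decreases in $j$, by exploiting that the gradient term is itself small.

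\textbf{Step 3 (from $L^2$ to $L^1$).} Write $f=\bar f+(f-\bar f)$ with $\bar f=\int_X f\,dV$, and estimate $\|f-\bar f\|_{L^2}$ via $\int|df|^2_{\omega_\epsilon}$. By Step~1 with $p=1$, $\int|df|^2\le C\delta_\epsilon\int f^2$. A Poincaré inequality for $\omega_\epsilon$ has constant at worst polynomial in $\epsilon^{-1}$: for the flat torus, the first eigenvalue of $\omega_\epsilon$ is of order $\epsilon$, because the base is stretched. Hence
\[
\|f-\bar f\|_{L^2}^2\le C\epsilon^{-1}\delta_\epsilon\|f\|_{L^2}^2,
\]
so $\|f\|_{L^2}\le(1+e^{-c/\epsilon})\|f\|_{L^1}$. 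Combining this with Step~2 proves \eqref{eq:L1-Linfty-cn} for $\epsilon\le\epsilon_0$.

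\textbf{Main obstacle.} The delicate step is the passage to $L^\infty$ without losing a fixed multiplicative constant, i.e.\ the control of $q_*$ and of the a priori sup bound in Step~2(a). I expect to handle it by first obtaining a polynomial bound $\sup f\le C\epsilon^{-N}\|f\|_{L^{2}}$ through the standard iteration. Then interpolation gives
\[
\|f\|_\infty\le \|f\|_{L^{q_*}}^{\theta}\,(C\epsilon^{-N}\|f\|_{L^2})^{1-\theta},
\]
with $1-\theta$ exponentially small, which absorbs the polynomial loss.
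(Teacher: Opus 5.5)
Your Step~1 is the paper's Moser step (your $q$ is the paper's $2p$). Your Step~3 is a valid alternative to the paper's passage from $L^2$ to $L^1$. The paper takes $p=1$ in the Moser step and uses $\|f\|_{L^2}\le\|f\|_{L^1}^{1/3}\|f\|_{L^4}^{2/3}$. You instead use the Poincar\'e inequality on the flat torus $(X,\omega_\epsilon)$, whose spectral gap is of order $\epsilon$. Both give $\|f\|_{L^2}\le(1+e^{-c/\epsilon})\|f\|_{L^1}$.

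The gap is in Step~2, and you created it yourself. The divergence comes only from bounding $(1+xq)^{1/q}\le e^{x}$, with $x=C\epsilon^{-10}\delta_\epsilon$, which discards the decay in $q$. If you keep the exact factor and use $\log(1+s)\le C\sqrt{s}$, then
\[
\log\prod_{j\ge1}\bigl(1+2^jx\bigr)^{2^{-j}}
=\sum_{j\ge1}2^{-j}\log\bigl(1+2^jx\bigr)
\le C\sqrt{x}\sum_{j\ge1}2^{-j/2}\le C\sqrt{x}.
\]
So the untruncated iteration converges and gives $\|f\|_{L^\infty}\le e^{C\sqrt{x}}\|f\|_{L^2}$, with $\sqrt{x}\le C\epsilon^{-5}e^{-c_0/(2\epsilon)}$. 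That is exactly the paper's proof. No truncation and no a priori sup bound are needed. Remedy (b) is beside the point: nothing about the gradient term is used beyond Step~1.

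Your workaround, as written, does not close. The claimed inequality $\|f\|_{L^\infty}\le\|f\|_{L^{q_*}}^{\theta}(C\epsilon^{-N}\|f\|_{L^2})^{1-\theta}$ is not an interpolation inequality. H\"older interpolation controls $L^r$ norms for $r$ strictly between two exponents, never the $L^\infty$ endpoint. Your argument uses $C\epsilon^{-N}\|f\|_{L^2}$ only as some upper bound $M\ge\|f\|_{L^\infty}$. Taking $M=\|f\|_{L^\infty}$, the same inequality would give $\|f\|_{L^\infty}\le\|f\|_{L^{q_*}}$, which fails for every nonconstant $f$ on a probability space.

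A polynomial $L^2\to L^\infty$ bound is therefore useless here. What step (a) actually needs is an $L^{q_*}\to L^\infty$ bound with constant of the form $(C\epsilon^{-N}q_*^{m})^{1/q_*}$. You can get this in either of two ways:
\begin{itemize}
\item Run the same global iteration from $q_*$ onward, crudely bounding each factor by $(C\epsilon^{-10}q)^{1/q}$. This yields a constant $(C'\epsilon^{-10}q_*)^{2/q_*}$.
\item Use a mean-value inequality for the subsolution $f^{q_*}$, whose potential has size $q_*\delta_\epsilon$ and is still exponentially small. Apply it on unit balls after the rescaling $z\mapsto\epsilon^{-1/2}z$, $w\mapsto\epsilon^{1/2}w$ on the universal cover.
\end{itemize}
The first option is just the tail of the product you declared divergent. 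That shows the missing idea is simply to keep the $1/q$ weighting of $\log(1+xq)$ instead of bounding each step by $e^{x}$.
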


\begin{proof}
Set $u=\Tr H_\epsilon>0$. By
\eqref{eq:trace-subsolution-cn},
\begin{equation}
-\Delta_\epsilon u
\leq
4\delta_\epsilon u.
\label{eq:u-subsolution-cn}
\end{equation}
For $p\geq1$, multiply \eqref{eq:u-subsolution-cn} by $u^{2p-1}$ and
integrate by parts. This gives
\[
\frac{2p-1}{p^2}
\int_X|d(u^p)|_{\omega_\epsilon}^2\,dV
\leq
4\delta_\epsilon
\int_Xu^{2p}\,dV.
\]
Since $p^2/(2p-1)\leq p$,
\begin{equation}
\int_X|d(u^p)|_{\omega_\epsilon}^2\,dV
\leq
4p\delta_\epsilon
\int_Xu^{2p}\,dV.
\label{eq:moser-gradient-cn}
\end{equation}
Applying \eqref{eq:fu-sobolev-upper-cn} to $v=u^p$ and using
\eqref{eq:moser-gradient-cn}, we obtain
\begin{equation}
\|u\|_{L^{4p}}
\leq
\left(
1+Cp\epsilon^{-10}\delta_\epsilon
\right)^{1/(2p)}
\|u\|_{L^{2p}}.
\label{eq:moser-step-cn}
\end{equation}

Set $q_\epsilon=C\epsilon^{-10}\delta_\epsilon$, where $C$ is the
constant in \eqref{eq:moser-step-cn}. Taking $p=2^j$ and iterating gives
\[
\|u\|_{L^\infty}
\leq
\prod_{j=0}^{\infty}
\left(
1+2^jq_\epsilon
\right)^{1/2^{j+1}}
\|u\|_{L^2}.
\]
Since $\log(1+s)\leq C\sqrt{s}$ for $s\geq0$,
\[
\log
\prod_{j=0}^{\infty}
\left(
1+2^jq_\epsilon
\right)^{1/2^{j+1}}
\leq
C\sqrt{q_\epsilon}
\sum_{j=0}^{\infty}\frac{1}{2^{\frac{j}{2}}}
\leq
C\sqrt{q_\epsilon}.
\]
Since
$\sqrt{q_\epsilon}\leq
C\epsilon^{-5}e^{-c_0/(2\epsilon)}$, after decreasing $c$ and
$\epsilon_0$ the preceding product is at most
$1+e^{-c/\epsilon}$. Hence
\begin{equation}
\|u\|_{L^\infty}
\leq
\left(
1+e^{-c/\epsilon}
\right)\|u\|_{L^2}.
\label{eq:L2-Linfty-cn}
\end{equation}

Taking $p=1$ in \eqref{eq:moser-step-cn} gives
\[
\|u\|_{L^4}\leq(1+q_\epsilon)^{1/2}\|u\|_{L^2}.
\]
Then by the interpolation
inequality
\[
\|u\|_{L^2}\leq\|u\|_{L^1}^{1/3}\|u\|_{L^4}^{2/3},
\]
we obtain
\[
\|u\|_{L^2}\leq(1+q_\epsilon)\|u\|_{L^1}.
\]
Substituting this into
\eqref{eq:L2-Linfty-cn}, using
$q_\epsilon\leq e^{-c/\epsilon}$ after another decrease of $c$ and
$\epsilon_0$, and then decreasing them once more,
proves \eqref{eq:L1-Linfty-cn}.
\end{proof}

With respect to the fixed metrics $(H_*,\omega)$, for
$A,B\in A^0(\End(V))$, define
\[
\langle A,B\rangle_{L^2(H_*,\omega)}
:=\int_X\Tr(AB^{*H_*})\,dV.
\]
In particular,
$\|B\|_{L^2(H_*,\omega)}^2=\int_X\Tr(BB^{*H_*})\,dV$.

We also need the following Poincar\'e type inequality which is standard. For completeness, we include a proof.

\begin{lemma}
	\label{lem:dolbeault-poincare-cn}
	Let $\bar\partial^*$ be the formal adjoint of $\bar\partial$ with respect
	to the $L^2$ inner product induced by $(H_*,\omega)$. There exists a
	constant $C>0$, depending only on $H_*$, $\omega$, and $V$, such that
	 for any $B\in A^0(\End(V))$ satisfying
	\[
	\int_X\Tr B\,dV=0,
	\]
we have
	\[
	\|B\|_{L^2(H_*,\omega)}^2
	\leq
	C\|\bar\partial B\|_{L^2(H_*,\omega)}^2.
	\]
\end{lemma}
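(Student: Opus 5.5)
The plan is to argue by contradiction via compactness, with the kernel of $\bar\partial$ identified through simplicity of $V$. The operator $\bar\partial:A^0(\End(V))\to A^{0,1}(\End(V))$ is elliptic on sections, so $\bar\partial^*\bar\partial$ is a second-order elliptic, self-adjoint, nonnegative operator on the fixed compact manifold $X$, with respect to the fixed data $(H_*,\omega)$. Its kernel is the space of holomorphic endomorphisms $H^0(X,\End(V))$. Since $V$ is slope-stable with respect to $\omega_{\epsilon_1}$ (as recalled in Section~\ref{sec:preliminaries}), it is simple, and so $H^0(X,\End(V))=\mathbb C\,\Id$. This identification is the only geometric input; everything else is functional analysis on a fixed compact manifold. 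All constants therefore depend only on $H_*$, $\omega$ and $V$.

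Concretely, I would first note the G\aa rding/elliptic estimate
\[
\|B\|_{W^{1,2}}^2\leq C\bigl(\|\bar\partial B\|_{L^2}^2+\|B\|_{L^2}^2\bigr),
\]
which holds because the symbol of $\bar\partial$ on $\End(V)$-valued functions is injective. Alternatively, one can write $\bar\partial B$ and $\partial_{H_*}B=-(\bar\partial B^{*H_*})^{*H_*}$ and integrate by parts, with curvature terms of order zero.

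Next, suppose the inequality fails. Then there are $B_k$ with $\int_X\Tr B_k\,dV=0$, $\|B_k\|_{L^2}=1$ and $\|\bar\partial B_k\|_{L^2}\to0$. By the estimate above the $B_k$ are bounded in $W^{1,2}$, so by Rellich a subsequence converges in $L^2$ to some $B$. We have $\|B\|_{L^2}=1$, $\bar\partial B=0$ weakly, and $\int_X\Tr B\,dV=0$, since the trace integral is $L^2$-continuous. Elliptic regularity makes $B$ smooth and holomorphic, hence $B=\lambda\Id$. Then $0=\int_X\Tr B\,dV=2\lambda$, so $\lambda=0$, which contradicts $\|B\|_{L^2}=1$.

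Equivalently, one can phrase this spectrally. The orthogonal complement of $\ker\bar\partial=\mathbb C\Id$ in $L^2(H_*,\omega)$ consists exactly of the $B$ with $\langle B,\Id\rangle=\int_X\Tr B\,dV=0$, because $\Id^{*H_*}=\Id$. The claimed constant is then the reciprocal of the first nonzero eigenvalue of $\bar\partial^*\bar\partial$, which exists by discreteness of the spectrum.

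The main point to get right is the kernel identification. I need simplicity of $V$, i.e.\ that every holomorphic endomorphism is a scalar. This follows from stability: for holomorphic $\sigma$ and an eigenvalue $\lambda$ at some point, $\sigma-\lambda\Id$ is a non-isomorphic endomorphism between stable bundles of the same slope, so it vanishes. Alternatively, it follows from irreducibility of the HYM connection of $H_{1,\epsilon_1}$. The ellipticity and compactness steps are routine on the fixed manifold.
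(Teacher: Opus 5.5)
Your proposal is correct and follows essentially the same route as the paper: simplicity of the stable bundle gives $\ker\bar\partial=\mathbb C\Id$, and a contradiction argument using the $W^{1,2}$ elliptic estimate, Rellich compactness, and elliptic regularity of the weak limit (with orthogonality to $\Id$ preserved under $L^2$ convergence) yields the inequality. The only cosmetic differences are that the paper gets the $W^{1,2}$ bound from the elliptic operator $\bar\partial+\bar\partial^*$ on $\End(V)$-valued $(0,*)$-forms rather than from injectivity of the symbol of $\bar\partial$, and it cites simplicity from Kobayashi instead of proving it as you do.
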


\begin{proof}
	As recalled in Section~\ref{sec:preliminaries}, the adiabatic argument
	of \cite{FMW} shows that $V$ is stable with respect to
	$\omega_\epsilon$ for all sufficiently small $\epsilon$. A stable
	holomorphic vector bundle is simple by
	\cite[Chapter~V, Corollary~7.14]{Kob}; hence
	\[
	H^0(X,\End(V))=\mathbb C\Id.
	\]
	Moreover,
	\[
	\ker(\bar\partial^*\bar\partial)
	=
	\ker\bar\partial
	=
	H^0(X,\End(V))
	=
	\mathbb C\Id.
	\]
	
	Suppose, for contradiction, that the asserted inequality does not
	hold. Then there exists a sequence
	$B_j\in A^0(\End(V))$ such that
	\[
	\int_X\Tr B_j\,dV=0,
	\qquad
	\|B_j\|_{L^2(H_*,\omega)}=1,
	\qquad
	\|\bar\partial B_j\|_{L^2(H_*,\omega)}\longrightarrow0.
	\]
	Since
	\[
	\langle B_j,\Id\rangle_{L^2(H_*,\omega)}
	=
	\int_X\Tr B_j\,dV=0,
	\]
	each $B_j$ is orthogonal to
	$\ker(\bar\partial^*\bar\partial)=\mathbb C\Id$.
	
	Consider the elliptic operator
	\[
	P=\bar\partial+\bar\partial^*
	\]
	acting on $\End(V)$-valued $(0,*)$-forms. By the standard elliptic
	estimate \cite[Theorem~1.4.1]{Joyce},
	\[
	\|B_j\|_{W^{1,2}(H_*,\omega)}
	\leq
	C\left(
	\|PB_j\|_{L^2(H_*,\omega)}
	+
	\|B_j\|_{L^2(H_*,\omega)}
	\right).
	\]
	Since $B_j$ is an $\End(V)$-valued $(0,0)$-form, one has
	$\bar\partial^*B_j=0$ and then
	\[
	PB_j=\bar\partial B_j.
	\]
	Thus $\{B_j\}$ is uniformly bounded in
	$W^{1,2}(X,\End(V))$.
	
	By the Rellich--Kondrachov theorem
	\cite[Chapter~2, Section~11]{Aubin}, after passing to a subsequence,
	there exists $B_\infty\in L^2(X,\End(V))$ such that
	\[
	B_j\longrightarrow B_\infty
	\qquad\text{strongly in }L^2(X,\End(V)).
	\]
	In particular,
	\[
	\|B_\infty\|_{L^2(H_*,\omega)}=1.
	\]
	
	For any smooth $\End(V)$-valued $(0,1)$-form $\eta$, we have
	\begin{align*}
		\left\langle B_\infty,\bar\partial^*\eta
		\right\rangle_{L^2(H_*,\omega)}
		&=
		\lim_{j\to\infty}
		\left\langle B_j,\bar\partial^*\eta
		\right\rangle_{L^2(H_*,\omega)}
		\\
		&=
		\lim_{j\to\infty}
		\left\langle\bar\partial B_j,\eta
		\right\rangle_{L^2(H_*,\omega)}
		=0.
	\end{align*}
	Thus $\bar\partial B_\infty=0$ in the distributional sense. Since
	$P=\bar\partial+\bar\partial^*$ is elliptic, elliptic regularity
	implies that $B_\infty$ is smooth. Therefore
	\[
	B_\infty\in
	\ker\bar\partial
	=
	H^0(X,\End(V))
	=
	\mathbb C\Id.
	\]
	
	On the other hand,  $B_j\perp \mathbb C\Id$ and the $L^2$-convergence of $\{B_j\}$ implies
	$
	B_\infty\perp\mathbb C\Id.
	$
	Thus $B_\infty=0$, contradicting
	$\|B_\infty\|_{L^2(H_*,\omega)}=1$. This proves the desired
	inequality.
	
	Since $H_*$, $\omega$, and $V$ are fixed, the constant $C$
	is independent of $\epsilon$.
\end{proof}

We next prove the exponential $C^0$ decay estimate
\begin{proposition}
\label{prop:trace-exponential-cn}
There exist constants $c>0$ and $\epsilon_0>0$, independent of
$\epsilon$, such that for any $0<\epsilon\leq\epsilon_0$,
\begin{equation}
\sup_X\left|\Tr H_\epsilon-2\right|
\leq
e^{-c/\epsilon}.
\label{eq:trace-exponential-cn}
\end{equation}
\end{proposition}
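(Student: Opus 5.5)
The plan is to combine the $L^1$--$L^\infty$ bound of Lemma~\ref{lem:L1-Linfty-cn} with an estimate showing that the average
\[
t:=\tfrac12\int_X\Tr H_\epsilon\,dV
\]
is exponentially close to $1$. Since $\det H_\epsilon=1$ and $H_\epsilon$ is positive and $H_{0,\epsilon}$-self-adjoint, its pointwise eigenvalues are $\lambda,\lambda^{-1}$ with $\lambda>0$. Hence $\Tr H_\epsilon=\lambda+\lambda^{-1}\geq2$, so $t\geq1$ and only an upper bound is needed. Lemma~\ref{lem:L1-Linfty-cn} gives
\[
\sup_X\Tr H_\epsilon\leq 2t\,(1+e^{-c/\epsilon}),
\]
so it suffices to show $t-1\leq e^{-c/\epsilon}$.

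To bound $t$, I set $B:=H_\epsilon-t\,\Id$. Then $\int_X\Tr B\,dV=0$ and $\bar\partial B=\bar\partial H_\epsilon$. I chain the following estimates:
\begin{itemize}
\item Lemma~\ref{lem:dolbeault-poincare-cn} controls $\|B\|^2_{L^2(H_*,\omega)}$ by $\|\bar\partial H_\epsilon\|^2_{L^2(H_*,\omega)}$.
\item \eqref{eq:normcomp} converts the latter into $C\epsilon^{-3}\|\bar\partial H_\epsilon\|^2_{L^2(H_{0,\epsilon},\omega_\epsilon)}$.
\item The energy estimate \eqref{eq:integrated-energy-cn} bounds this by $2\delta_\epsilon\sup\Tr H_\epsilon\int\Tr H_\epsilon$.
\item Lemma~\ref{lem:L1-Linfty-cn} bounds $\sup\Tr H_\epsilon$ by $2t(1+e^{-c/\epsilon})$.
\end{itemize}
Altogether this yields
\[
\|B\|^2_{L^2(H_*,\omega)}\leq C\epsilon^{-3}\delta_\epsilon\,t^2 .
\]
On the left I pass back to $H_{0,\epsilon}$ norms. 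By \eqref{eq:fu-h0-comparison-cn} and the same diagonalization argument as in the proof of Proposition~\ref{prop:h0-comparison-cn}, $|A|^2_{H_{0,\epsilon}}\leq C\epsilon^{-2}|A|^2_{H_*}$ pointwise, so
\[
\int_X|B|^2_{H_{0,\epsilon}}\,dV\leq C\epsilon^{-5}\delta_\epsilon t^2 .
\]

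The key pointwise step uses that $B$ is $H_{0,\epsilon}$-self-adjoint with eigenvalues $\lambda-t$ and $\lambda^{-1}-t$. Thus
\[
|B|^2_{H_{0,\epsilon}}=f(\lambda):=(\lambda-t)^2+(\lambda^{-1}-t)^2 .
\]
I need the elementary inequality $\inf_{\lambda>0}f(\lambda)\geq c\,(t-1)^2$ for all $t\geq1$, with an absolute $c>0$. The argument splits into two cases. If $\lambda\leq t$, then $\lambda^{-1}\le \lambda$ is impossible when $\lambda<1$ unless handled separately, so I argue directly: one of $\lambda,\lambda^{-1}$ is $\leq1$, and that one lies at distance $\geq t-1$ from $t$. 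Hence $f(\lambda)\geq (t-1)^2$. This is actually immediate and even gives $c=1$.

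Integrating gives $(t-1)^2\leq C\epsilon^{-5}\delta_\epsilon t^2$. Since $\delta_\epsilon=C_0e^{-c_0/\epsilon}$, for small $\epsilon$ the factor $C\epsilon^{-5}\delta_\epsilon\leq\frac14$, which forces $t\leq2$. Feeding this back gives $t-1\leq e^{-c/\epsilon}$ after shrinking $c$ and $\epsilon_0$. Then
\[
0\leq\Tr H_\epsilon-2\leq 2t(1+e^{-c/\epsilon})-2\leq e^{-c'/\epsilon}.
\]

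The main obstacle is bookkeeping of norms rather than any single estimate. The Poincaré inequality lives in the fixed metric $(H_*,\omega)$, while the energy estimate and the eigenvalue structure of $B$ live in $(H_{0,\epsilon},\omega_\epsilon)$. Each switch between them costs a polynomial factor $\epsilon^{-N}$, which is harmless only because $\delta_\epsilon$ is exponentially small. A second subtlety is circularity: the right-hand side contains $\sup\Tr H_\epsilon$, which depends on $t$ itself. This is why I close the argument through the quadratic inequality in $t$, with the $t^2$ on the right absorbed because its coefficient is tiny, instead of assuming an a priori bound on $t$.
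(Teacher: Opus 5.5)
Your argument is correct. Up to the bound $\|B\|^2_{L^2(H_*,\omega)}\le C\epsilon^{-3}\delta_\epsilon t^2$ it coincides with the paper's proof. Both use the same centering $B=H_\epsilon-t\Id$, the Poincar\'e inequality of Lemma~\ref{lem:dolbeault-poincare-cn}, the comparison \eqref{eq:normcomp}, the energy estimate \eqref{eq:integrated-energy-cn}, and Lemma~\ref{lem:L1-Linfty-cn}.

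The genuine difference is how you extract $t-1$ from the smallness of $B$.

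The paper never leaves the fixed metric $H_*$. It expands the rank-two identity $1=\det(t\Id+B)=t^2+t\Tr B+\det B$ and integrates it, using $\int_X\Tr B\,dV=0$, to get $t^2-1=-\int_X\det B\,dV$. It then bounds $|\det B|\le\frac12|B|^2_{H_*}$, which holds for an arbitrary, not necessarily $H_*$-self-adjoint, matrix. So $t-1$ is controlled linearly by $\|B\|^2_{L^2(H_*,\omega)}$, with no return to $H_{0,\epsilon}$.

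Your route is pointwise. Since $\det H_\epsilon=1$, one eigenvalue is at most $1\le t$, so $|B|^2_{H_{0,\epsilon}}\ge(t-1)^2$. This controls $t-1$ only by $\|B\|$ rather than $\|B\|^2$, and the switch of metrics costs an extra factor $\epsilon^{-2}$. Both losses are harmless against $\delta_\epsilon$. In exchange, your step does not rely on the special rank-two expansion of the determinant, so it carries over verbatim to higher rank with $t=\frac1r\int_X\Tr H_\epsilon\,dV$.

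You could also skip the switch back to $H_{0,\epsilon}$. In an $H_*$-unitary frame $|B|^2_{H_*}$ is a Frobenius norm, and by Schur's inequality it dominates the sum of the squared moduli of the eigenvalues $\lambda-t,\lambda^{-1}-t$ of $B$. Hence $|B|^2_{H_*}\ge(t-1)^2$ directly, even though $B$ need not be $H_*$-self-adjoint.

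Finally, delete the garbled false start about the case $\lambda\le t$. The one-line observation that follows it is by itself a complete proof of the pointwise inequality.
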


\begin{proof}
Set
$B_\epsilon:=H_\epsilon-a_\epsilon\Id$, where
\[
a_\epsilon:=\frac12\int_X\Tr H_\epsilon\,dV.
\]
Since $\int_XdV=1$,
Lemma~\ref{lem:L1-Linfty-cn} gives
\begin{align}
\|\Tr H_\epsilon\|_{L^\infty}
\notag\leq&
\left(
1+e^{-c/\epsilon}
\right)
\int_X\Tr H_\epsilon\,dV
\\=&
2\left(
1+e^{-c/\epsilon}
\right)a_\epsilon.
\label{eq:M-by-a-cn}
\end{align}

By \eqref{eq:integrated-energy-cn} and
Proposition~\ref{prop:h0-comparison-cn},
\[
\|\bar\partial H_\epsilon\|_{L^2(H_*,\omega)}^2
\leq
C\epsilon^{-3}\delta_\epsilon
\|\Tr H_\epsilon\|_{L^\infty(X)}
\int_X\Tr H_\epsilon\,dV.
\]
Combining this with \eqref{eq:M-by-a-cn} and the definition of
$\delta_\epsilon$, and absorbing the factor
$\epsilon^{-3}$ into the exponential, we obtain, for some $c_1>0$,
\begin{equation}
\|\bar\partial H_\epsilon\|_{L^2(H_*,\omega)}^2
\leq
Ca_\epsilon^2e^{-c_1/\epsilon}.
\label{eq:H-dbar-exponential-cn}
\end{equation}

By definition,
$\int_X\Tr B_\epsilon\,dV=0$ and
$\bar\partial B_\epsilon=\bar\partial H_\epsilon$. Hence
Lemma~\ref{lem:dolbeault-poincare-cn} and
\eqref{eq:H-dbar-exponential-cn} imply
\begin{equation}
\|B_\epsilon\|_{L^2(H_*,\omega)}^2
\leq
Ca_\epsilon^2e^{-c_1/\epsilon}.
\label{eq:B-exponential-cn}
\end{equation}

Since $H_\epsilon$ is positive definite and $\det H_\epsilon=1$, one
has $\Tr H_\epsilon\geq2$ pointwise, and hence $a_\epsilon\geq1$.
Since $V$ has rank two,
\[
1=\det H_\epsilon
=\det(a_\epsilon\Id+B_\epsilon)
=a_\epsilon^2+a_\epsilon\Tr B_\epsilon+\det B_\epsilon.
\]
Integrating this identity and using $\int_XdV=1$ and
$\int_X\Tr B_\epsilon\,dV=0$, we obtain
\[
a_\epsilon^2-1=-\int_X\det B_\epsilon\,dV.
\]
Since
\[
|\det B_\epsilon|
\leq\frac12|B_\epsilon|_{H_*}^2,
\]
we obtain
\begin{equation}
0\leq
a_\epsilon^2-1
\leq
\left|\int_X\det B_\epsilon\,dV\right|
\leq
\frac12
\|B_\epsilon\|_{L^2(H_*,\omega)}^2.
\label{eq:det-average-cn}
\end{equation}

It follows from \eqref{eq:B-exponential-cn} and
\eqref{eq:det-average-cn} that
\begin{equation}
0\leq a_\epsilon^2-1
\leq
Ca_\epsilon^2e^{-c_1/\epsilon}.
\label{eq:a-square-cn}
\end{equation}
After decreasing $\epsilon_0$, assume
$Ce^{-c_1/\epsilon}\leq\frac12$. Then
\eqref{eq:a-square-cn} first gives $a_\epsilon^2\leq2$, and hence,
after decreasing the exponential rate if necessary, we get
\begin{equation}
0\leq a_\epsilon-1
\leq
e^{-c_2/\epsilon},
\label{eq:a-close-one-cn}
\end{equation}
for some $c_2>0$.

Therefore, Lemma~\ref{lem:L1-Linfty-cn} and
\eqref{eq:a-close-one-cn} yield
\[
\|\Tr H_\epsilon\|_{L^{\infty}(X)}\leq2(1+e^{-c/\epsilon})a_\epsilon
\leq2+Ce^{-c_3/\epsilon},
\]
for some $c_3>0$. Since $\Tr H_\epsilon\geq2$ pointwise, the
 constant $C$ can be absorbed by decreasing $c_3$ and
$\epsilon_0$. This proves \eqref{eq:trace-exponential-cn}.
\end{proof}

\begin{proof}[Proof of the $C^0$ estimate in
Theorem~\ref{Thm1.1}]
Fix $x\in X$, and let $\lambda_\epsilon(x)\geq1$ be the larger
eigenvalue of the $H_{0,\epsilon}$-self-adjoint endomorphism
$H_\epsilon(x)$. Since $\det H_\epsilon=1$, the other eigenvalue is
$\lambda_\epsilon(x)^{-1}$, and
\[
\Tr H_\epsilon-2
=
\lambda_\epsilon+\lambda_\epsilon^{-1}-2
=
\frac{(\lambda_\epsilon-1)^2}{\lambda_\epsilon}.
\]
By Proposition~\ref{prop:trace-exponential-cn}, for sufficiently small
$\epsilon$ one has
$\lambda_\epsilon\leq\Tr H_\epsilon\leq3$. Therefore,
\[
|H_\epsilon-\Id|_{H_{0,\epsilon}}^2
=(\lambda_\epsilon-1)^2
+(\lambda_\epsilon^{-1}-1)^2
=(\lambda_\epsilon+\lambda_\epsilon^{-1})
(\Tr H_\epsilon-2)
\leq3e^{-c/\epsilon}.
\]
Then we obtain the desired
$C^0$ estimate by decreasing  $c$.
\end{proof}

\begin{proof}[Proof of the higher order estimates in Theorem~\ref{Thm1.1}]
We repeat the argument in the proof of \cite[Theorem~3]{Fu}, using the
exponential $C^0$ estimate above and the exponential mean-curvature
estimate \eqref{eq:mean-curvature-exponential}. For any fixed
derivative order, the coordinate rescalings and coefficient estimates
in that argument introduce only finitely many negative powers of
$\epsilon$. Thus all resulting error terms are bounded by finite sums
of terms of the form $\epsilon^{-N}e^{-c/\epsilon}$.

For any  $N>0$ and $c>0$, there exists $c'>0$ such that
\[
\epsilon^{-N}e^{-c/\epsilon}\leq e^{-c'/\epsilon},
\]
for all sufficiently small $\epsilon$. Consequently, for any
nonnegative integer $k$, there exist constants $C_k,c_k>0$, independent
of $\epsilon$, such that
\[
\|H_\epsilon-\Id\|_{C^k(H_{0,\epsilon})}
\leq C_ke^{-c_k/\epsilon}.
\]
This completes the proof of Theorem~\ref{Thm1.1}.
\end{proof}
\section*{Acknowledgments}
Fu is grateful to Professor Jun Li for his continued guidance and
encouragement. In particular, the problem studied in this paper was
suggested to him by Professor Li. Fu and Zhang also thank Professors
Kefeng Liu, Xiaokui Yang and Shing-Tung Yau, Weiping Zhang for helpful discussions.
\begingroup
\sloppy

\endgroup
\bigskip

\noindent
\textsc{Jixiang Fu}\\
Shanghai Center for Mathematical Sciences, Fudan University,
Shanghai 200433, China\\
\textit{E-mail address:}
\texttt{majxfu@fudan.edu.cn}

\medskip

\noindent
\textsc{Dekai Zhang}\\
School of Mathematical Sciences, Key Laboratory of Mathematics and
Engineering Applications (Ministry of Education), Shanghai Key
Laboratory of PMMP, East China Normal University,
Shanghai 200241, China\\
\textit{E-mail address:}
\texttt{dkzhang@math.ecnu.edu.cn}

\begin{thebibliography}{99}

\bibitem{Aubin}
T. Aubin,
\newblock \emph{Some Nonlinear Problems in Riemannian Geometry},
\newblock Springer Monographs in Mathematics,
Springer-Verlag, Berlin, 1998.

\bibitem{ChenViaclovskyZhang2020}
G. Chen, J. Viaclovsky and R. Zhang,
\newblock Collapsing Ricci-flat metrics on elliptic K3 surfaces,
\newblock \emph{Comm.  Anal. Geom.} \textbf{28} (2020),
2019--2133.

\bibitem{DatarJacob2022}
V. Datar and A. Jacob,
\newblock Hermitian--Yang--Mills connections on collapsing elliptically
fibered K3 surfaces,
\newblock \emph{J. Geom. Anal.} \textbf{32} (2022),
30 pp.

\bibitem{DatarJacobZhang2021}
V. Datar, A. Jacob and Y. Zhang,
\newblock Adiabatic limits of anti-self-dual connections on collapsed K3 surfaces,
\newblock \emph{J. Differential Geom.} \textbf{118} (2021),
223--296.

\bibitem{Don1985}
S. K. Donaldson,
\newblock Anti self-dual Yang--Mills connections over complex algebraic surfaces
and stable vector bundles,
\newblock \emph{Proc.  London Math. Soc.}
\textbf{50} (1985),
1--26.

\bibitem{Don1987}
S. K. Donaldson,
\newblock Infinite determinants, stable bundles and curvature,
\newblock \emph{Duke Math. J.} \textbf{54} (1987),
231--247.

\bibitem{Fri}
R. Friedman,
\newblock Rank two vector bundles over regular elliptic surfaces,
\newblock \emph{Invent. Math.} \textbf{96} (1989),
283--332.

\bibitem{FMW}
R. Friedman, J. Morgan and E. Witten,
\newblock Vector bundles and \(F\) theory,
\newblock \emph{Comm.  Math. Phys.} \textbf{187} (1997),
679--743.

\bibitem{Fu}
J. Fu,
\newblock Limiting behavior of a class of Hermitian Yang--Mills metrics, I,
\newblock \emph{Sci. China Math.} \textbf{62} (2019),
2155--2194.


\bibitem{GTZ2013}
M. Gross, V. Tosatti and Y. Zhang,
\newblock Collapsing of abelian fibred Calabi--Yau manifolds,
\newblock \emph{Duke Math. J.} \textbf{162} (2013),
517--551.

\bibitem{GW}
M. Gross and P. M. H. Wilson,
\newblock Large complex structure limits of K3 surfaces,
\newblock \emph{J. Differential Geom.} \textbf{55} (2000),
475--546.

\bibitem{HeinTosatti2020}
H.-J. Hein and V. Tosatti,
\newblock Higher-order estimates for collapsing Calabi--Yau metrics,
\newblock \emph{Camb. J.  Math.} \textbf{8} (2020),
683--773.

\bibitem{HeinTosatti2025}
H.-J. Hein and V. Tosatti,
\newblock Smooth asymptotics for collapsing Calabi--Yau metrics,
\newblock \emph{Comm.  Pure  Appl. Math.}
\textbf{78} (2025), 382--499.

\bibitem{Joyce}
D. D. Joyce,
\newblock \emph{Compact Manifolds with Special Holonomy},
\newblock Oxford Mathematical Monographs,
Oxford University Press, Oxford, 2000.

\bibitem{Kob}
S. Kobayashi,
\newblock \emph{Differential Geometry of Complex Vector Bundles},
\newblock Princeton University Press, Princeton, 1987.

\bibitem{Kon}
M. Kontsevich,
\newblock Homological algebra of mirror symmetry,
\newblock in \emph{Proceedings of the International Congress of Mathematicians,
Z\"urich, 1994}, Vols.~1--2, Birkh\"auser, Basel, 1995, 120--139.

\bibitem{Leung}
N. C. Leung,
\newblock Geometric aspects of mirror symmetry (with SYZ for rigid
CY manifolds),
\newblock in \emph{Second International Congress of Chinese Mathematicians},
New Studies in Advanced Mathematics, vol.~4,
International Press, Somerville, MA, 2004, 305--342.



\bibitem{RuanZhang2011}
W.-D. Ruan and Y. Zhang,
\newblock Convergence of Calabi--Yau manifolds,
\newblock \emph{Adv. Math.} \textbf{228} (2011),
1543--1589.

\bibitem{Siu1987}
Y.-T. Siu,
\newblock \emph{Lectures on Hermitian--Einstein Metrics for Stable Bundles
and K\"ahler--Einstein Metrics},
\newblock DMV Seminar, vol.~8, Birkh\"auser, Basel, 1987.

\bibitem{SYZ}
A. Strominger, S.-T. Yau and E. Zaslow,
\newblock Mirror symmetry is \(T\)-duality,
\newblock \emph{Nuclear Physics B} \textbf{479} (1996),
243--259.

\bibitem{Tosatti2010}
V. Tosatti,
\newblock Adiabatic limits of Ricci-flat K\"ahler metrics,
\newblock \emph{J. Differential Geom.} \textbf{84} (2010),
427--453.

\bibitem{UY}
K. Uhlenbeck and S.-T. Yau,
\newblock On the existence of Hermitian--Yang--Mills connections in stable
vector bundles,
\newblock \emph{Comm.  Pure  Appl. Math.}
\textbf{39} (1986), S257--S293.


\bibitem{Wilson2004}
P. M. H. Wilson,
\newblock Metric limits of Calabi--Yau manifolds,
\newblock in \emph{The Fano Conference},
University of Torino, Turin, 2004, 793--804.

\bibitem{Yau}
S.-T. Yau,
\newblock On the Ricci curvature of a compact K\"ahler manifold and the
complex Monge--Amp\`ere equation, I,
\newblock \emph{Comm.  Pure  Appl. Math.}
\textbf{31} (1978), 339--411.

\bibitem{Zharkov2004}
I. Zharkov,
\newblock Limiting behavior of local Calabi--Yau metrics,
\newblock \emph{Adv. Theor. Math. Phys.}
\textbf{8} (2004), 395--420.

\end{thebibliography}
\end{document}